\newcommand{\setequationspacing}{
  \setlength{\abovedisplayskip}{5pt}
  \setlength{\belowdisplayskip}{5pt}
  \setlength{\abovedisplayshortskip}{3pt}
  \setlength{\belowdisplayshortskip}{3pt}
}
\numberwithin{equation}{section}
\def\@setfontsize#1#2#3{\fontsize{#2}{#3}\selectfont} 
\renewenvironment{abstract}
 {\vspace{12pt}\begin{quote}\normalfont\fontsize{9pt}{11pt}\textsc{Abstract}
   \normalfont\fontsize{9pt}{11pt}\selectfont}
 {\end{quote}}
\newtheorem{theorem}{Theorem}[section]
\newtheorem{lemma}[theorem]{Lemma}
\newtheorem{proposition}[theorem]{Proposition}
\newtheorem{corollary}[theorem]{Corollary}
\newcommand{\blue}[1]{\textcolor{blue}{#1}}
\begin{document}

\fontsize{11pt}{13pt}\selectfont
\title[Roots of hyperelliptic involutions]{Roots of hyperelliptic involutions and braid groups modulo their center inside mapping class groups} 
\author[Ryan Lamy]{Ryan Lamy}
\date{\today}

\address{Address}

\email{ryanlamy1@gmail.com}

\maketitle

\let\thefootnote\relax
\footnotetext{MSC2020: Primary 57K20, Secondary 20F36} 

\vspace{-5ex}

\begin{abstract}
    Let $n,k\in\mathbb{N}$ and let $S$ be the closed surface of genus $nk$. A copy of the braid group on $2k+2$ strands modulo its center is found inside $\mathrm{Mod}(S)$, provided $n\geq 3$. In particular, for $k=1$ the class of the half-twist braid inside $B_4/Z(B_4)$ is identified with a hyperelliptic involution inside $\mathrm{Mod}(S)$. As a consequence, we can show that each hyperelliptic involution inside $\mathrm{Mod}(S)$ has infinitely many square roots, and discuss their conjugacy classes. Furthermore, a copy of $\mathrm{Mod}(S_1)\cong\mathrm{SL}_2(\mathbb{Z})$ is found inside $\mathrm{Mod}(S_2)$. This subgroup contains the unique hyperelliptic involution on $S_2$. As a result, we can show that the latter admits infinitely many square and cubic roots, and discuss their conjugacy classes.
\end{abstract}

\vspace{8pt}

\section{Introduction}
\label{Section 1}

It is well known that given a closed orientable surface $S$ of genus $g$, a hyperelliptic involution on $S$ can be expressed via the composition of Dehn twists
\[
T_{c_1}\circ ... \circ T_{c_{2g+1}}\circ T_{c_{2g+1}}\circ...\circ T_{c_1},
\]
where $c_1,...,c_{2g+1}$ is a chain of simple closed curves in $S$ \cite[Section 5.1.4]{Primer}. In this paper, we explore a novel way of expressing hyperelliptic involutions on $S$ in terms of Dehn twists, one which admits infinitely many square (and sometimes cubic) roots. The construction required to assemble these hyperelliptic involutions will give rise to a copy of $\mathrm{SL}_2(\mathbb{Z})$ inside $\mathrm{Mod}(S_2)$, and copies of braid groups modulo their center inside $\mathrm{Mod}(S)$ whenever $g\geq 3$. As such, the hyperelliptic involutions constructed therein (as well as their roots) will be identified with matrices in $\mathrm{SL}_2(\mathbb{Z})$ for genus 1 and 2, and with braids for genus 3 and higher.

\vspace{9pt}

\subsection{Definitions and conventions}
\label{Section 1.2}
Before we begin, an overview of the recurring notation and definitions in this paper is given. For the following, $S$ is an orientable surface of genus $g$.
\begin{enumerate}
    \item We denote by $\mathrm{Mod}(S)$ the \textit{mapping class group} of $S$, defined as   
    \begingroup
\setequationspacing
    \[
    \mathrm{Mod}(S)\coloneq\mathrm{Homeo}^+(S)/\mathrm{Homeo}_0(S),
    \]
    \endgroup
Where $\mathrm{Homeo}^+(S)$ denotes the group of orientation preserving homeomorphisms from $S$ to itself which preserve the boundary of $S$ pointwise, and $\mathrm{Homeo}_0(S)$ denotes the subgroup consisting of those homeomorphisms which are isotopic to the identity.

\vspace{9pt}

\item Assuming the surface $S$ is closed and $g>0$, The \textit{symplectic representation} 
\begingroup
\setequationspacing
\[
\Psi:\mathrm{Mod}(S)\longrightarrow \mathrm{Sp}_{2g}(\mathbb{Z})
\]
\endgroup
is the map which describes the action of mapping classes on $H_1(S)\cong\mathbb{Z}^{2g}$ via matrices living inside the symplectic group $\mathrm{Sp}_{2g}(\mathbb{Z})$. More on this can be found in Farb/Margalit's book \cite[Chapter 6]{Primer}.

\vspace{9pt}

\item Given the above, a mapping class $\rho\in\mathrm{Mod}(S)$ is called a \textit{hyperelliptic involution} if $\rho^2=1$ and $\Psi(\rho)=-\mathrm{Id}_{2g}$.

\vspace{9pt}

\item For any simple closed curve $\alpha$ in $S$, the left Dehn twist about the curve $\alpha$ is denoted $T_{\alpha}$ and the right Dehn twist about the curve $\alpha$ is denoted $T_{\alpha}^{-1}$. We consider these as mapping classes inside $\mathrm{Mod}(S)$ rather than individual homeomorphisms.

\vspace{9pt}

\item {\spaceskip=5.2pt Suppose $G$ is a group acting on the surface $S$ by Deck transformations and} \\ $p:S\longrightarrow S/G$ is the induced branched cover. Then, $\mathrm{SMod}(S)$ denotes the subgroup of $\mathrm{Mod}(S)$ consisting of all fiber-preserving mapping classes under $p$, otherwise known as \textit{symmetric mapping classes}. A mapping class is fiber-preserving if it contains a representative homeomorphism which preserves fibers under $p$. More on this can be found in Margalit/Winarski's survey paper \cite{MR4275077}. 

\end{enumerate}

\vspace{9pt}

\subsection{Outline and results}
A concise summary of the results in this paper is now given. Let $n,k\in\mathbb{N}$ and let $S$ be the closed surface of genus $nk$. Consider the following simple closed curves in $S$.

\begin{center}  
  \captionof{figure}{}
  \vspace{7pt}
  \includegraphics[width=0.75\textwidth]{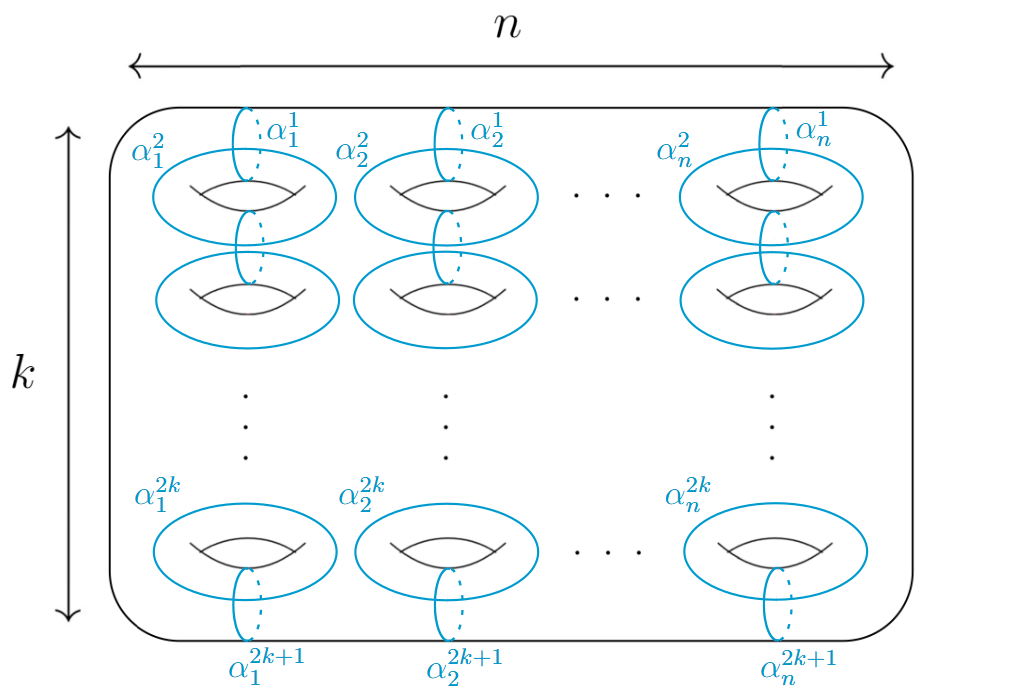}  
  \label{Figure P}
\end{center}

\newpage

Let $\Theta_n^k$ denote the subgroup of $\mathrm{Mod}(S)$ generated by every mapping class of the form 
\begingroup
\setequationspacing
\[
f_i\coloneq T_{\alpha_1^i}^{-1}\circ T_{\alpha_2^i}\circ T_{\alpha_3^i}^{-1}...\circ T_{\alpha_n^i}^{(-1)^n}, 
\]
\endgroup
for $i\in\{1,...,2k+1\}$. Our first objective is to determine the structure of the group $\Theta_n^k$ for each $n,k\in\mathbb{N}$, and our second is to express hyperelliptic involutions in terms of its generators $f_i$. To this effect, the following theorems are proven.\\
\\\textbf{Theorem 2.1.} \textit{We have that $\Theta_2^1\cong \mathrm{SL}_2(\mathbb{Z})$.} (\blue{Section \ref{Section 3}})
\\
\\This will be shown using the symplectic representation $\Psi$ of mapping class groups. Note, it is a standard result that $\mathrm{Mod}(S_1)\cong\mathrm{SL}_2(\mathbb{Z})$, and thus according to this theorem, $\Theta_2^1$ acts as a copy of $\mathrm{Mod}(S_1)$ inside $\mathrm{Mod}(S_2)$. Notice also that $\Theta_1^1=\mathrm{Mod}(S_1)$ by construction, so that $\Theta_1^1\cong\Theta_2^1$. This next theorem shows that whenever $k,n\in\mathbb{N}$ satisfy $n\geq 3$ and $S$ is the closed surface of genus $nk$, there is a copy of the braid group on $2k+2$ strands (here denoted $B_{2k+2}$) modulo its center (here denoted $Z(B_{2k+2}$)) inside $\mathrm{Mod}(S)$.
\\
\\\textbf{Theorem 3.3.}{\spaceskip=4pt
\textit{ Let $k,n\in\mathbb{N}$ with $n\geq3$. Then, $\Theta_n^k\cong B_{2k+2}/Z(B_{2k+2})$.} (\blue{Section \ref{Section 5.3}})}\\
\\ The proof for the above result requires a lot of machinery to set up. In particular, we go over cutting, capping and gluing homomorphisms in \blue{Section \ref{Section 4.1}} and the Birman-Hilden theorem in \blue{ Section \ref{Section 4.2}}. Since the center of any standard braid group is isomorphic to $\mathbb{Z}$, the above theorem implies that we have a short exact sequence 
\begingroup
\setequationspacing
\[
1\longrightarrow \mathbb{Z} \longrightarrow B_{2k+2} \longrightarrow \Theta^k_n \longrightarrow 1
\]
\endgroup
for every $k,n\in\mathbb{N}$ with $n\geq 3$. Furthermore, we know that the inner autormorphism {\spaceskip=3pt group of a group $G$, denoted $\mathrm{Inn}(G)$, is always isomorphic to $G$ modulo its center.} Hence, by the above theorem we have an embedding of $\mathrm{Inn}(B_{2k+2})$ inside $\mathrm{Mod}(S)$.

As for the remaining three theorems, we focus our attention to the case where $k=1$ and relabel the generators of $\Theta_n^1$ to be $a\coloneq f_1$, $b\coloneq f_2$, and $c\coloneq f_3$. It will be shown that $a=c$ whenever the genus is 1 or 2.
\\
\\\textbf{Theorem 4.1.} \textit{Let $S$ be a closed surface of genus $n$. We have that $(abc)^2\in\mathrm{Mod}(S)$ is a hyperelliptic involution on $S$.} (\blue{Section \ref{Section 5}})\\
\\Recall that the hyperelliptic involution is unique for genus 1 and 2, and that all hyperelliptic involutions are conjugate for genus 3 and higher \cite[Section 7.4]{Primer}, so in particular they are all conjugate to $(abc)^2$ according to this theorem.

\newpage

Assume $n\geq 3$ so that we have $\Theta_n^1\cong B_4/Z(B_4)$. One may notice that the hyperelliptic involution $(abc)^2\in\Theta_n^1$ corresponds exactly to the half-twist braid inside $B_4$:

\vspace{8pt}

\noindent\begin{minipage}{\textwidth}
\centering
\includegraphics[scale=0.42]{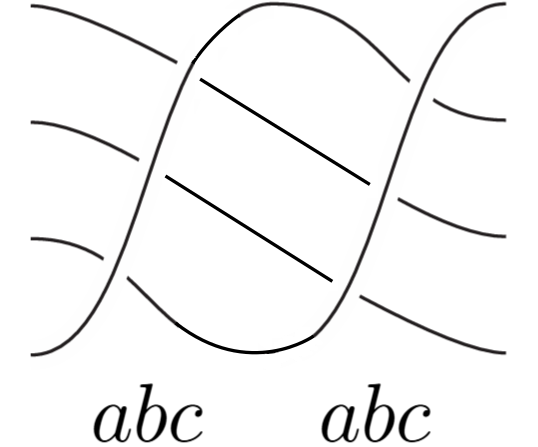}
\end{minipage}

\vspace{9pt}

In this illustration, $a,b,$ and $c$ correspond to bottom, middle and top crossings respectively. It is interesting to note that this is a square root of the full-twist braid $(abc)^4$, which generates the center of $B_4$ cyclically, and by which we are modding out. This is discussed further in \blue{Section \ref{Section 5}}. These last two theorems concern the roots of hyperelliptic involutions.
\\
\\\textbf{Theorem 5.2.} \textit{Let $S$ be a closed surface of genus $n\in\{1,2\}$. Then, the hyperelliptic involution inside $\mathrm{Mod}(S)$ has infinitely many square and cubic roots inside $\Theta_n^1$. The former roots are precisely the conjugacy classes of $(aba)$ and $(aba)^{-1}$ within $\Theta_n^1$, and the latter roots are precisely the conjugacy classes of $(ab)$ and $(ab)^{-1}$ within $\Theta_n^1$, as well as the hyperelliptic involution itself.} (\blue{Section \ref{Section 5.1}})\\
\\ Since $\Theta_n^1\cong\mathrm{SL}_2(\mathbb{Z})$ 
whenever $n\in\{1,2\}$, the representative elements $(aba),(aba)^{-1},(ab),$ and $(ab)^{-1}$ mentioned in the above theorem may be identified with matrices. Respectively, these will turn out to be the elliptic matrices
\vspace{2pt}
\begingroup
\setequationspacing
\[
\begin{bmatrix}
   0 & 1\\
   -1 & 0
\end{bmatrix},\quad
\begin{bmatrix}
     0 & -1\\
   1 & 0
\end{bmatrix},\quad
\begin{bmatrix}
     0 & 1\\
   -1 & 1
\end{bmatrix},\quad
\begin{bmatrix}
     1 & -1\\
   1 & 0
\end{bmatrix}.
\]
\endgroup

\vspace{3pt}

The theorem is proven via this identification.
\\
\\\textbf{Theorem 5.3.} \textit{ Let $S$ be a closed surface of genus $n\geq 3$. Then, each hyperelliptic involution in $\mathrm{Mod}(S)$ has infinitely many square roots that are conjugate to either $(abc)\in\Theta_n^1$ or $(abc)^{-1}\in\Theta_n^1$.} (\blue{Section \ref{Section 5.2}})
\\
\\
Finally, in \blue{Section \ref{Section 7}}, we go over the classification of every group of type $\Theta_n^k$, which may be visualized efficiently by partitioning the $\mathbb{N}^2$ lattice. in the following figure, each point $(n,k)$ corresponds to the group $\Theta_n^k$. The group they are isomorphic to depends on the box they lie in.
\newpage
\begin{center}  
  \captionof{figure}{The classification of groups of type $\Theta_n^k$.}
  \includegraphics[width=0.7\textwidth]{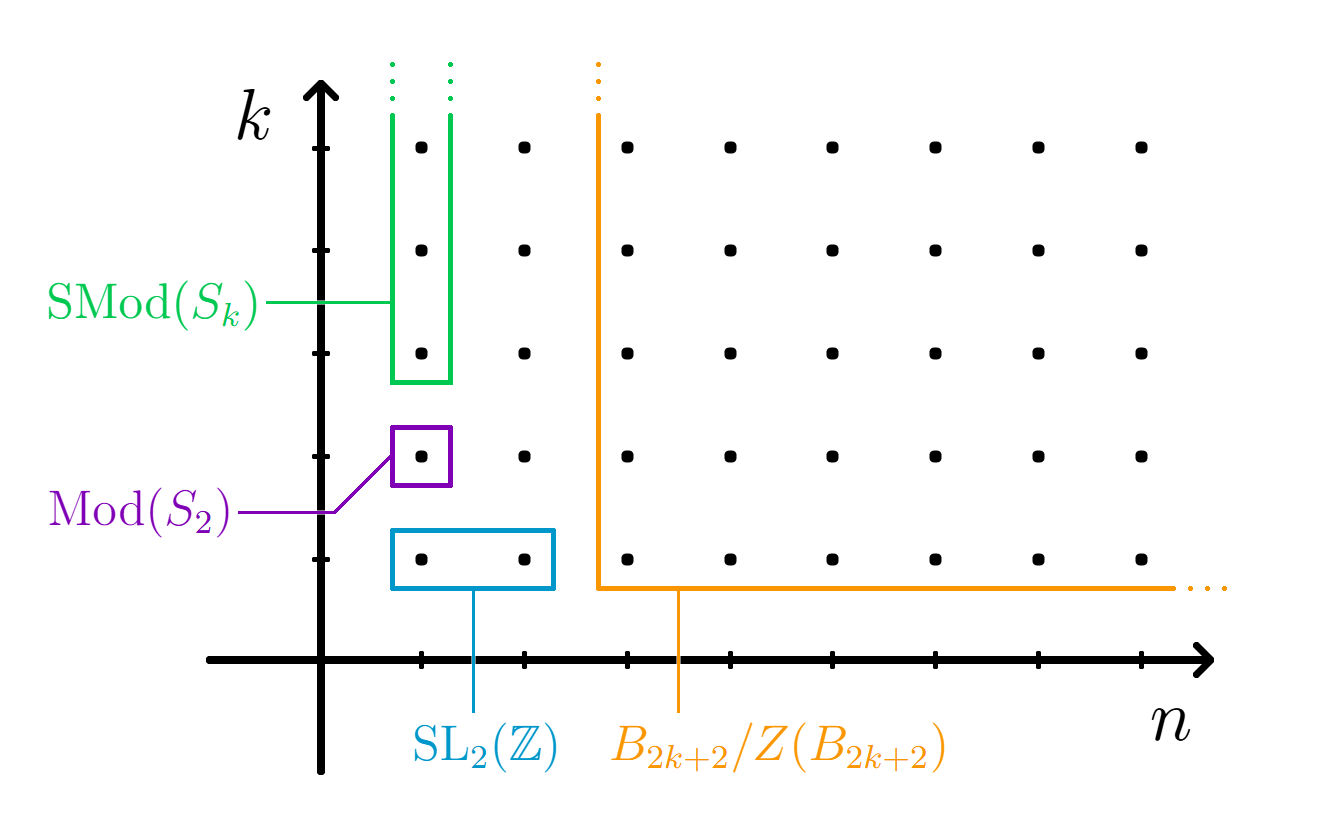}  
  
\end{center}

The blue box is due to \textbf{Theorem 2.1} and the orange box is due to \textbf{Theorem 3.3}. The rest of the classification is discussed in \blue{Section \ref{Section 7}}, and in particular, we will show that the groups of type $\Theta_2^k$ with $k\geq 2$ don't belong in the orange or green boxes. Their exact structure, however, is left as an unsolved problem.

\vspace{15pt}

\textbf{\textit{Acknowledgements.}} The author would like to thank the following.
\begin{itemize}
    \item The EPSRC, for their financial support (grant number: EP/T517896/1).
    \item Philipp Bader, for supervising the internship which led to the writing of this paper and for his extensive help.
    \item Jim Belk, Tara Brendle, and John Nicholson for helpful conversations.
\end{itemize}

\vspace{15pt}

\section{Finding a copy of $\mathrm{Mod}(S_1)$ inside $\mathrm{Mod}(S_2)$}

\label{Section 3}

It is a standard result that the symplectic representation of $\mathrm{Mod}(S_1)$ yields an isomorphism to the special linear group $\mathrm{SL}_2(\mathbb{Z})$ \cite[Theorem 2.5]{Primer}. We may therefore make use of the group presentation in \cite[Section 5.1.3]{Primer}:
\begingroup
\setequationspacing
\begin{align}
\mathrm{Mod}(S_1)&\cong\mathrm{SL}_2(\mathbb{Z})=\langle\,A,B\,|\,ABA=BAB,\,\,(AB)^6=1\,\rangle, \\
&\text{where  } A=\begin{bmatrix}
1 & 1\\
0 & 1
\end{bmatrix} \text{  and  } B=\begin{bmatrix}
1 & 0\\
-1 & 1
\end{bmatrix}. \nonumber
\end{align}
\endgroup
Here, $A$ and $B$ correspond to Dehn twists about a meridian curve and a longitudinal curve on the torus respectively. In this section, it will be shown that there is a subgroup $\mathcal{T}$ of $\mathrm{Mod}(S_2)$ whose symplectic representation yields an analogous isomorphism.

\newpage

\begin{center}  
  \captionof{figure}{}
  \includegraphics[width=0.53\textwidth]{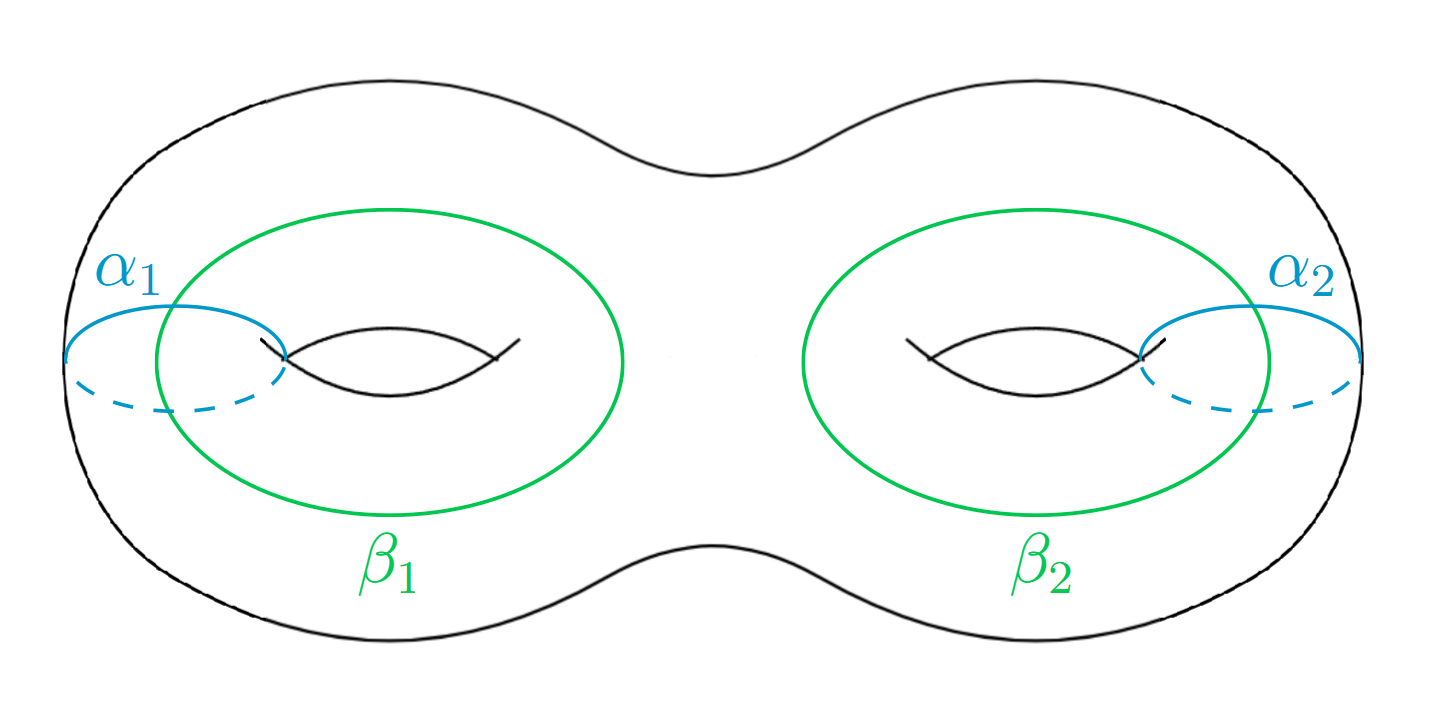}  
  \label{Figure OG}
\end{center}

First, consider the curves on $S_2$ illustrated in the above picture and define the following mapping classes in $\mathrm{Mod}(S_2):$
\begin{align*}
a&\coloneq T_{\alpha_1}^{-1}\circ T_{\alpha_2},\\
b&\coloneq T_{\beta_1}^{-1}\circ T_{\beta_2}.
\end{align*}

Let $\mathcal{T}\coloneq\langle a,b\rangle\leq\mathrm{Mod}(S_2)$. The following theorem shows that $\mathcal{T}$ essentially acts as a copy of $\mathrm{Mod}(S_1)$ inside $\mathrm{Mod}(S_2)$.

\begin{theorem}
\label{Theorem 3.1}
    We have that $\mathcal{T}\cong \mathrm{SL}_2(\mathbb{Z})$.
\end{theorem}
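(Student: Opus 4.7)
The plan is to leverage the symplectic representation $\Psi : \mathrm{Mod}(S_2) \to \mathrm{Sp}_4(\mathbb{Z})$, in direct analogy with the presentation of $\mathrm{SL}_2(\mathbb{Z})$ recalled above. First I would read off the homology classes $[\alpha_i], [\beta_j] \in H_1(S_2;\mathbb{Z})$ from the figure and fix a symplectic basis of $H_1(S_2;\mathbb{Z})$, then apply the standard transvection formula $\Psi(T_c)(v) = v + \langle v,[c]\rangle [c]$ to write $\Psi(a) = \Psi(T_{\alpha_1})^{-1}\Psi(T_{\alpha_2})$ and $\Psi(b) = \Psi(T_{\beta_1})^{-1}\Psi(T_{\beta_2})$ as explicit matrices in $\mathrm{Sp}_4(\mathbb{Z})$.

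Next I would exhibit a rank-two sublattice $L \subset H_1(S_2;\mathbb{Z})$ on which $\Psi(a)|_L$ and $\Psi(b)|_L$ agree, after a suitable change of basis, with the matrices $A$ and $B$ from the recalled presentation, and verify that both matrices act trivially on a complementary invariant sublattice. Such a block decomposition immediately identifies $\langle \Psi(a),\Psi(b)\rangle \subset \mathrm{Sp}_4(\mathbb{Z})$ with $\mathrm{SL}_2(\mathbb{Z})$, and in particular yields a surjection $\phi : \mathcal{T} \twoheadrightarrow \mathrm{SL}_2(\mathbb{Z})$.

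To upgrade this to an isomorphism, it suffices to construct a surjection $\pi : \mathrm{SL}_2(\mathbb{Z}) \twoheadrightarrow \mathcal{T}$ sending $A \mapsto a$, $B \mapsto b$, which reduces to verifying the two defining relations $aba = bab$ and $(ab)^6 = 1$ for $a, b$ directly inside $\mathrm{Mod}(S_2)$. The braid relation should be extracted from the pairwise braid and commutation relations between the four individual Dehn twists $T_{\alpha_i}, T_{\beta_j}$ dictated by the intersection pattern of the curves. For the order-six relation, the most efficient route is to identify $(ab)^3 \in \mathcal{T}$ with the unique hyperelliptic involution $\iota \in \mathrm{Mod}(S_2)$ — mirroring $(AB)^3 = -\mathrm{Id}$ in $\mathrm{SL}_2(\mathbb{Z})$ — so that $(ab)^6 = \iota^2 = 1$ follows automatically. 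Given $\pi$, the composition $\phi \circ \pi$ is the identity on the generators of $\mathrm{SL}_2(\mathbb{Z})$, and residual finiteness (hence the Hopfian property) of $\mathrm{SL}_2(\mathbb{Z})$ forces both $\pi$ and $\phi$ to be isomorphisms.

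The principal obstacle I anticipate is verifying the braid relation $aba = bab$ directly in $\mathrm{Mod}(S_2)$: since $a$ and $b$ are each compositions of two Dehn twists on curves with nontrivial mutual intersections, the identity does not reduce immediately to the single-twist braid relation $T_c T_d T_c = T_d T_c T_d$, and a careful manipulation of the six-letter word guided by the geometry of the $\alpha_i, \beta_j$ appears necessary. The order-six relation should be comparatively painless once $(ab)^3 = \iota$ is in hand.
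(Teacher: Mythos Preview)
Your overall architecture matches the paper's: restrict the symplectic representation $\Psi$ to $\mathcal{T}$ to get a surjection onto a copy of $\mathrm{SL}_2(\mathbb{Z})$, then verify the two defining relations $aba=bab$ and $(ab)^6=1$ inside $\mathrm{Mod}(S_2)$ to produce a map the other way, and conclude. A few points where your plan diverges from what actually happens deserve comment.

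First, the block structure is not what you predict. With respect to the obvious basis one finds
\[
\Psi(a)=\begin{pmatrix}A^{-1}&0\\0&A\end{pmatrix},\qquad \Psi(b)=\begin{pmatrix}B^{-1}&0\\0&B\end{pmatrix},
\]
so there is no complementary sublattice on which the action is trivial; rather, the two blocks carry mutually inverse $\mathrm{SL}_2(\mathbb{Z})$-actions. This does not damage your conclusion, since $\{( M^{-1},M)\}\cong\mathrm{SL}_2(\mathbb{Z})$ via projection to either block, but you should expect to see this when you compute.

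Second, your difficulty assessment is inverted. The curves $\alpha_1,\beta_1$ live on one handle and $\alpha_2,\beta_2$ on the other, so all ``cross-handle'' twists commute. Writing $a_j=T_{\alpha_j}^{\pm1}$, $b_j=T_{\beta_j}^{\pm1}$, disjointness gives $aba=(a_1b_1a_1)(a_2b_2a_2)$ and $bab=(b_1a_1b_1)(b_2a_2b_2)$, and the single-twist braid relation on each handle finishes it in one line. This is the easy relation. By contrast, your proposed route to $(ab)^6=1$ via ``identify $(ab)^3$ with the hyperelliptic involution $\iota$'' is substantially more work than necessary: establishing $(ab)^3=\iota$ as mapping classes is essentially a separate theorem (the paper proves it later, not here). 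The paper instead uses disjointness to write $(ab)^6=(a_1b_1)^6(a_2b_2)^6$ and then the chain relation on each handle: each factor is a Dehn twist about the same separating curve $\varepsilon$, with opposite sign, so they cancel. That is both shorter and self-contained.

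Finally, invoking the Hopfian property is harmless but unnecessary: once $\phi\circ\pi$ agrees with the identity on generators it \emph{is} the identity, so $\pi$ is injective outright. The paper phrases the same step as ``any word in $\ker\Psi|_{\mathcal{T}}$ is a product of conjugates of the two relators, which we have just shown are trivial in $\mathcal{T}$.''
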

\begin{proof}
We will show that the symplectic representation $\Psi$ of $\mathrm{Mod}(S_2)$ restricts to an isomorphism on $\mathcal{T}$. First, we determine the image of $\mathcal{T}$ under $\Psi$. Notice that
\[
\Psi(a)=
\left[
\begin{array}{c|c}
A^{-1} & \boldsymbol{0} \\
\hline
\boldsymbol{0} & A
\end{array}
\right], \text{  and  }\Psi(b)=
\left[
\begin{array}{c|c}
B^{-1} & \boldsymbol{0} \\
\hline
\boldsymbol{0} & B
\end{array}
\right],  
\]
so it is easily verified that $\langle\Psi(a),\Psi(b)\rangle\cong\langle A,B\rangle=\mathrm{SL}_2(\mathbb{Z})$. Since $a$ and $b$ generate $\mathcal{T}$, we conclude that the image of $\mathcal{T}$ under $\Psi$ is a subgroup of $\mathrm{Sp}_4(\mathbb{Z})$ which is isomorphic to $\mathrm{SL}_2(\mathbb{Z})$. 

Next, in the interest of proving the injectivity of this restriction, we would like to show that every relation in the presentation (2.1) is satisfied by the generators of $\mathcal{T}$. We begin by proving $(ab)^6=1$. Define the following notational shorthands:
\[
a_1=T_{\alpha_1}^{-1},\,\,\,a_2=T_{\alpha_2},\,\,\,b_1=T_{\beta_1}^{-1},\,\,\,b_2=T_{\beta_2}.
\]
Since these elements are individual Dehn twists in $\mathrm{Mod}(S_2)$, they satisfy the disjointness relations \cite[Fact 3.9]{Primer}. So in particular we may write
\[
ab=(a_1a_2)(b_1b_2)=(a_1b_1)(a_2b_2),
\]
and, again by disjointness,
\[
(ab)^6=((a_1b_1)(a_2b_2))^6=(a_1b_1)^6(a_2b_2)^6.
\]
But then, by a chain relation \cite[Section 4.4.1]{Primer},
\begin{align}
(ab)^6=T_{\boldsymbol{\varepsilon}}\circ T_{\boldsymbol{\varepsilon}}^{-1}=1,
\end{align}
where $\boldsymbol{\varepsilon}$ is the following curve in $S_2$:

\begin{center}  
  \includegraphics[width=0.6\textwidth]{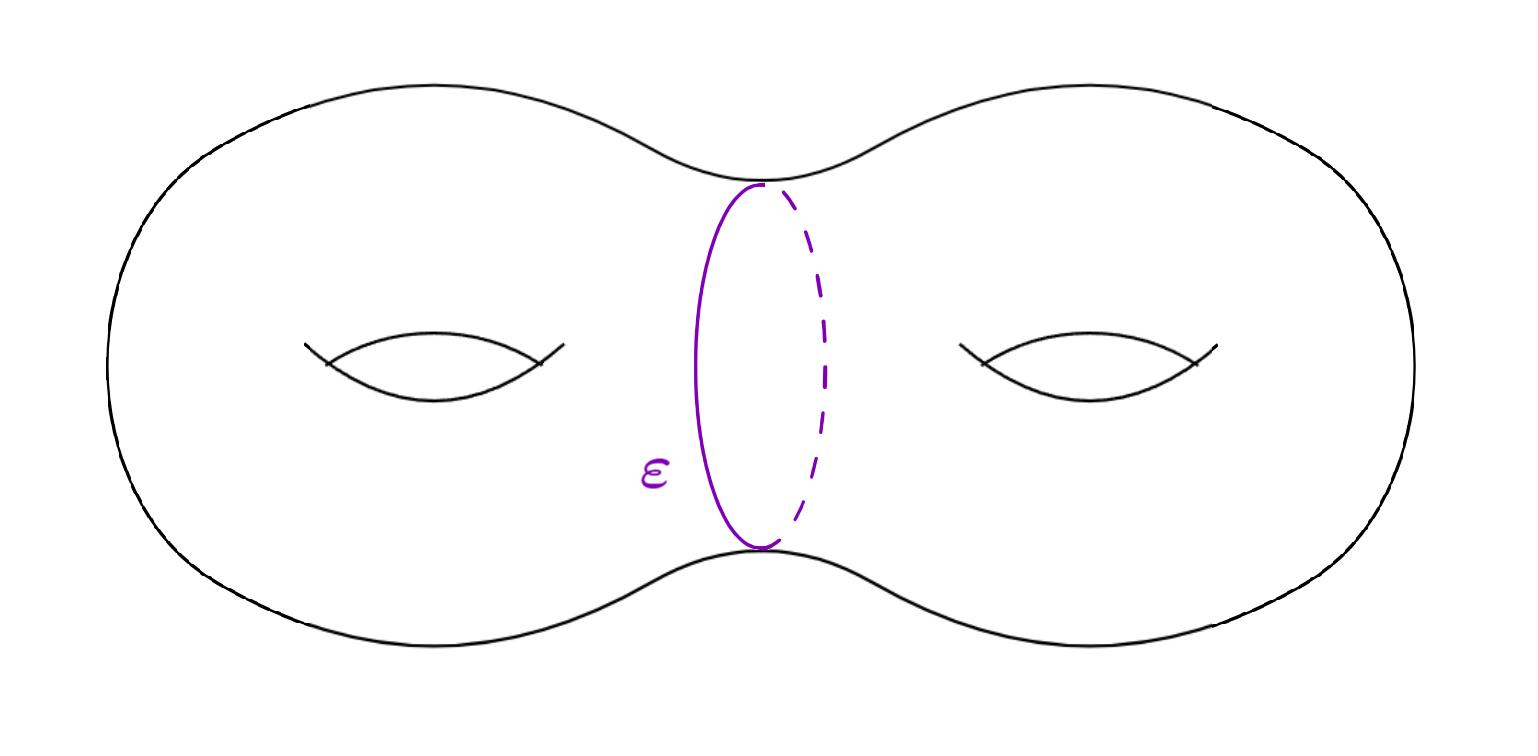}
\end{center}

It remains to show that the braid relation $aba=bab$ holds. We already know that the braid and disjointness relations are satisfied by the Dehn twists $a_1$, $a_2$, $b_1$ and $b_2$ \cite[Section 3.5.1]{Primer}. Therefore,
\begin{align}
    aba&=(a_1a_2)(b_1b_2)(a_1a_2)=a_1b_1a_2a_1b_2a_2=(a_1b_1a_1)(a_2b_2a_2) \nonumber\\
    &=(b_1a_1b_1)(b_2a_2b_2)=b_1a_1b_2b_1a_2b_2=(b_1b_2)(a_1a_2)(b_1b_2)=bab.
\end{align}
Now, suppose $w$ is a word in $\mathcal{T}$ written in the alphabet $\{a,b\}$ such that $\Psi(w)=1$. This means $\Psi(w)$ is a product of conjugates of the relators in the presentation (2.1), spelt in the alphabet $\{A,B\}$. But then $w$ is also a product of conjugates of these relators, spelt in the alphabet $\{a,b\}$. Thus, the equations (2.2) and (2.3) imply $w=1$. We conclude $\Psi$ restricts to an isomorphism on $\mathcal{T}$, which means $\mathcal{T}\cong\mathrm{SL}_2(\mathbb{Z})$.
\end{proof}

\vspace{10pt}

\section{Finding a copy of $B_{2k+2}/Z(B_{2k+2})$ inside $\mathrm{Mod}(S)$}
\label{Section 4}
Let $k,n\in \mathbb{N}$
and let $S$ denote the closed surface of genus $nk$. Recall the simple closed curves on $S$ illustrated in \blue{Figure \ref{Figure P}}, and recall that $\Theta_n^k$ denotes the subgroup of $\mathrm{Mod}(S)$ generated by every mapping class of the form 
\[
f_i\coloneq T_{\alpha_1^i}^{-1}\circ T_{\alpha_2^i}\circ T_{\alpha_3^i}^{-1}...\circ T_{\alpha_n^i}^{(-1)^n}, 
\]
for $i\in\{1,...,2k+1\}$.

Let us first study the special case where $k=1$ and $n=2$. Recall the curves $\alpha_1$ and $\alpha_2$ illustrated in \blue{Figure \ref{Figure OG}}. Notice that the curves $\alpha_1^1$ and $\alpha_1^3$ are both isotopic to the curve $\alpha_1$, and the curves $\alpha_2^1$ and $\alpha_2^3$ are both isotopic to the curve $\alpha_2$. Hence, in this case, the generators $f_1$ and $f_3$ are equal and correspond to the mapping class $a$ defined in \blue{Section \ref{Section 3}}. Furthermore, the curves $\alpha_1^2$ and $\alpha_2^2$ are equal to the curves $\beta_1$ and $\beta_2$ from \blue{Figure \ref{Figure OG}} respectively, so that $f_2$ corresponds to the generator $b$. As a result, $\Theta_2^1=\mathcal{T}$.

Also, $\Theta_1^1$ is clearly equal to $\mathrm{Mod}(S_1)$. According to \blue{Theorem \ref{Theorem 3.1}}, this must mean that $\Theta_1^1\cong\Theta_2^1\cong\mathrm{SL}_2(\mathbb{Z})$. In this section however, we are interested in finding an explicit group presentation for $\Theta_n^k$ whenever $n\geq3$ and $k$ is arbitrary. This will require the Birman-Hilden theorem \cite[Section 9.4.1]{Primer}, as well as some cutting, capping and gluing homomorphisms, which we discuss below. For a summary of the classification of all groups of type $\Theta_n^k$, see \blue{Section \ref{Section 7}}.

\newpage

\begin{minipage}{\textwidth}
\subsection{Cutting, capping and gluing homomorphisms}
\label{Section 4.1}
Let $n\geq3$ and consider the following simple closed curves on $S$:

\vspace{-2pt}

\begin{center}  
  \captionof{figure}{}  \includegraphics[width=0.6\textwidth]{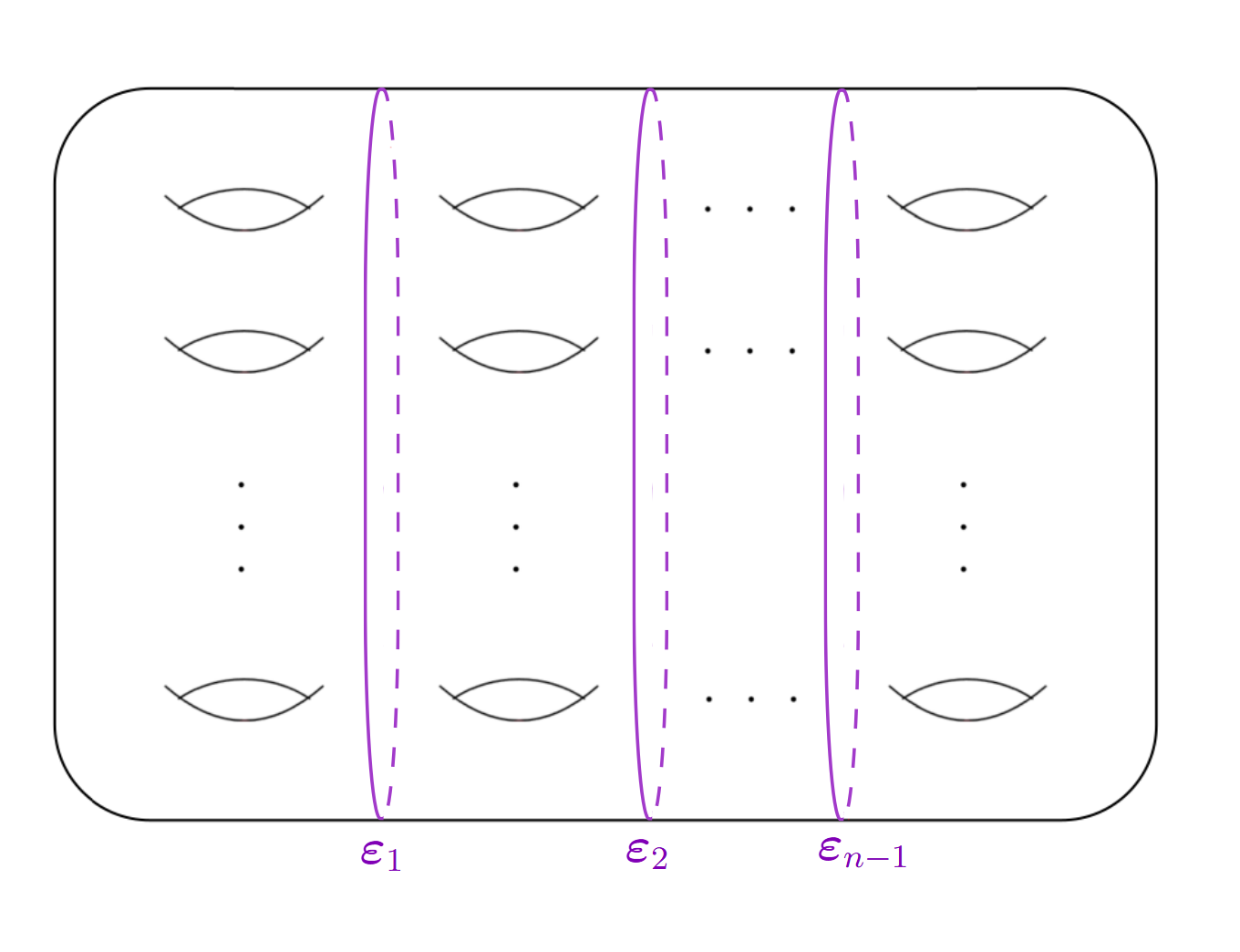}  
  \label{Figure S}
\end{center}

\vspace{-3pt}

If we cut along the curves $\boldsymbol{\varepsilon}_j$ but retain boundary components along the affected areas, then we obtain a surface $S_{\mathrm{cut}}$ which we may express as a disjoint union of surfaces
\begingroup
\setequationspacing
\[
S_{\mathrm{cut}}=S_k^1\sqcup\left(\bigsqcup_{l=1}^{n-2}(S_k^2)_{_l}\right)\sqcup S_k^1,
\]
\endgroup
where $S_k^b$ denotes the surface of genus $k$ with $b$ boundary components. Here we label the connected components of $S_{\mathrm{cut}}$ with a specific ordering in mind:

\vspace{10pt}

\begin{center}  
  \includegraphics[width=0.62\textwidth]{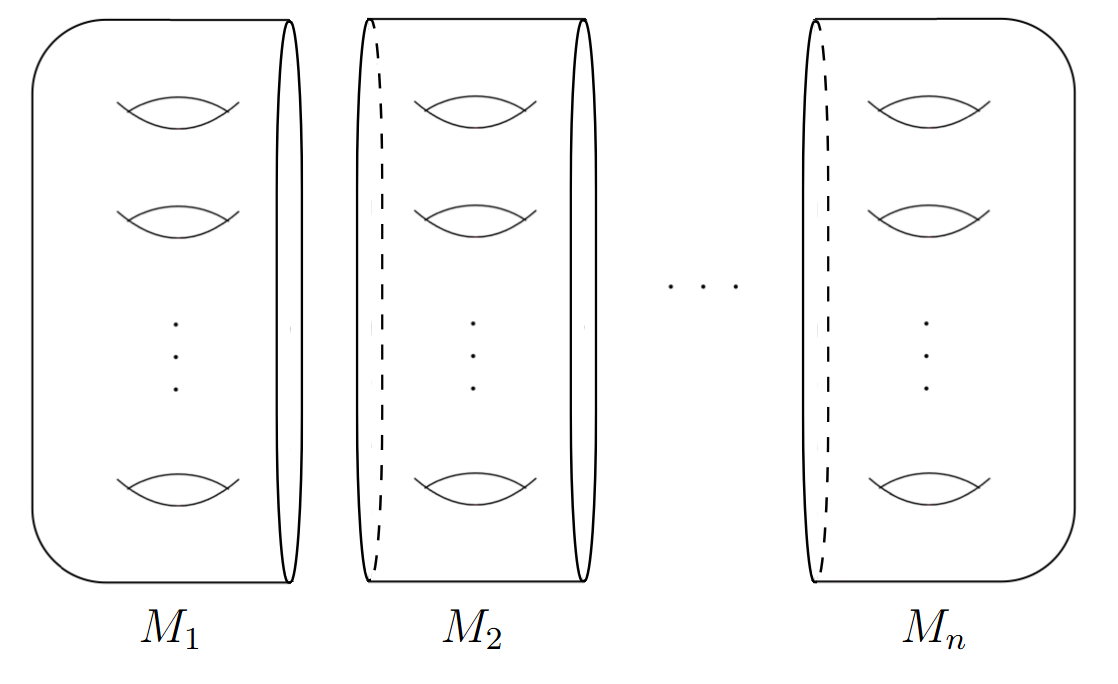}  
\end{center}

Additionally, if we let $S\mkern-1mu{\vrule width0pt height 1.7ex}^\mathrm{\,o}_{\mathrm{cut}}$ denote the interior of $S_{\mathrm{cut}}$, we have that $S\mkern-1mu{\vrule width0pt height 1.7ex}^\mathrm{\,o}_{\mathrm{cut}} \cong S\,\backslash\cup\boldsymbol{\varepsilon}_j$. Notice, moreover, that if we glue a punctured disk onto each boundary component of $S_{\mathrm{cut}}$, we obtain a surface which is homeomorphic to $S\mkern-1mu{\vrule width0pt height 1.7ex}^\mathrm{\,o}_{\mathrm{cut}}$. In other words, we may write
\[
S\mkern-1mu{\vrule width0pt height 1.7ex}^\mathrm{\,o}_{\mathrm{cut}}=S_{k,1}\sqcup\left(\bigsqcup_{l=1}^{n-2}(S_{k,2})_{_l}\right)\sqcup S_{k,1},
\]
where $S_{k,p}$ denotes the surface of genus $k$ with $p$ punctures.
\end{minipage}

\newpage

\begin{center}  
  \includegraphics[width=0.63\textwidth]{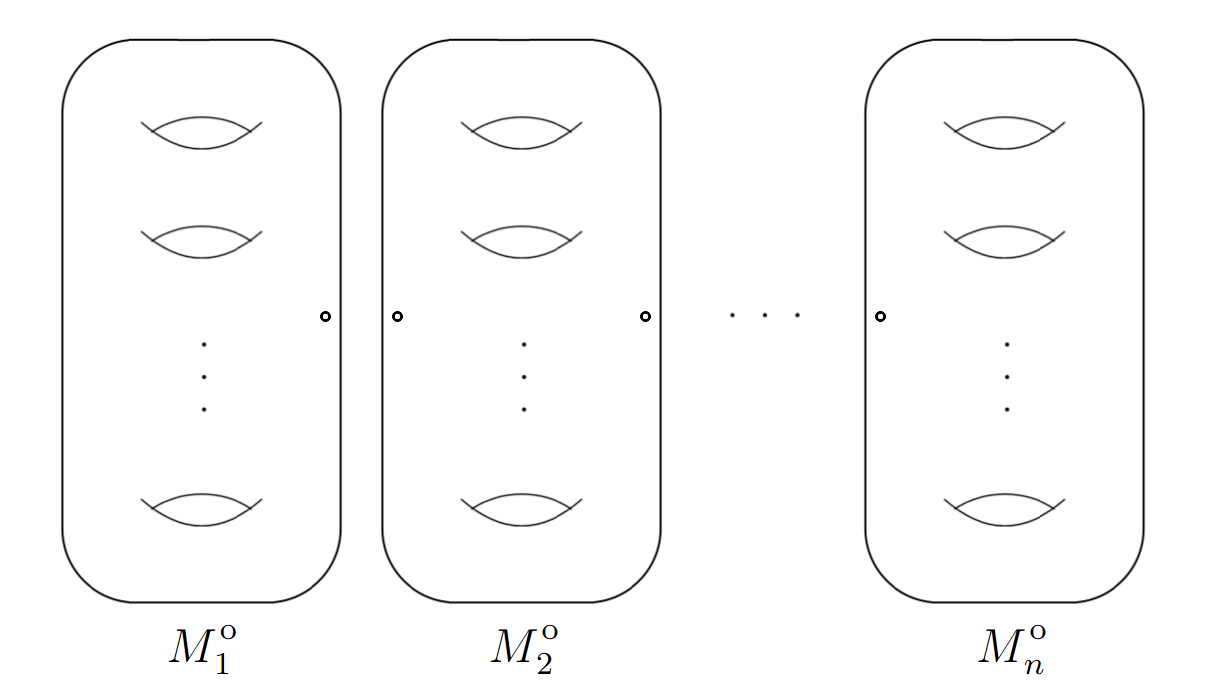}  
\end{center}

\vspace{-2pt}

Now, let $\mathrm{FMod}(S\mkern-1mu{\vrule width0pt height 1.7ex}^\mathrm{\,o}_{\mathrm{cut}})$ denote the mapping classes on $S\mkern-1mu{\vrule width0pt height 1.7ex}^\mathrm{\,o}_{\mathrm{cut}}$ which fix all punctures. Then, we have a well-defined induced homomorphism $\mathfrak{C}:\mathrm{Mod}(S_{\mathrm{cut}})\longrightarrow\mathrm{FMod}(S\mkern-1mu{\vrule width0pt height 1.7ex}^\mathrm{\,o}_{\mathrm{cut}})$ called the capping homomorphism \cite[Section 3.6.2]{Primer}. It is defined in the following way. Suppose $\phi$ is a homeomorphism on $S_{\mathrm{cut}}$. Define $\phi^c$ to be the homeomorphism on $S\mkern-1mu{\vrule width0pt height 1.7ex}^\mathrm{\,o}_{\mathrm{cut}}$ obtained by performing $\phi$ on $S_{\mathrm{cut}}$, capping each boundary component with a punctured disk, and then extending by the identity on said punctured disks. The capping homomorphism is then given by $\mathfrak{C}([\phi])=[\phi^c]$. Technically, this is a composition of several capping homomorphisms (one for each punctured disk), but for the sake of simplicity we will consider it as one. 

Next, we let $\mathrm{FMod}(S)$ denote the mapping classes on $S$ which fix the homotopy class of each curve $\boldsymbol{\varepsilon}_j$. Then, we also have a well-defined induced homomorphism $\mathfrak{K}:\mathrm{FMod}(S)\longrightarrow\mathrm{FMod}(S\mkern-1mu{\vrule width0pt height 1.7ex}^\mathrm{\,o}_{\mathrm{cut}})$ called the cutting homomorphism \cite[Section 3.6.3]{Primer}, which is defined in the following way. For any mapping class $f\in\mathrm{FMod}(S)$, pick a representative homeomorphism $\phi$ of $f$ which fixes $\cup\boldsymbol{\varepsilon}_j$. $\mathfrak{K}(f)$ is simply the mapping class of the restriction of $\phi$ to $S\,\backslash\cup\boldsymbol{\varepsilon}_j$.

Finally, we construct a third induced map $\mathfrak{G}:\mathrm{Mod}(S_{\mathrm{cut}})\longrightarrow\mathrm{Mod}(S)$ which we call the gluing homomorphism. It is defined as follows. Let $\phi$ be a homeomorphism on $S_{\mathrm{cut}}$. Define $\phi'$ to be the homeomorphism on $S$ obtained by performing $\phi$ on $S_{\mathrm{cut}}$, and then gluing the rightmost boundary component of $M_j$ to the leftmost boundary component of $M_{j+1}$ for all $j\in\{1,...,n-1\}$. The gluing homomorphism is then given by $\mathfrak{G}([\phi])=[\phi']$.

We will prove that $\mathfrak{G}$ is well-defined. Assume $\phi$ and $\psi$ are isotopic homeomorphisms on $S_{\mathrm{cut}}$. Then, since $S_{\mathrm{cut}}$ is a disjoint union of $n$ connected surfaces labeled $M_1$, $M_2$, ..., $M_n$, we may express $\phi$ and $\psi$ as two $n$-tuples of homeomorphisms $(\phi_1,...,\phi_n)$ and $(\psi_1,...,\psi_n)$ respectively. Thus, for each $j\in\{1,...,n\}$, there is an isotopy $I_j$ occuring on $M_j$ from $\phi_j$ to $\psi_j$ which fixes the boundary of $M_j$. Now notice that we may denote the connected components in $S\,\backslash\cup\boldsymbol{\varepsilon}_j$ by $M\mkern-1mu{\vrule width0pt height 1.7ex}^\mathrm{\,o}_j$ for any $j\in\{1,...,n\}$. With this in mind, let $I\mkern-1mu{\vrule width0pt height 1.7ex}^\mathrm{\,o}_j$ be the restriction of the isotopy $I_j$ to $M\mkern-1mu{\vrule width0pt height 1.7ex}^\mathrm{\,o}_j$. Next, recall the definition of $\phi'$ from the above paragraph and define $\psi'$ similarly. We may construct an isotopy from $\phi'$ to $\psi'$ as follows: on each $M\mkern-1mu{\vrule width0pt height 1.7ex}^\mathrm{\,o}_j$, perform the isotopy $I\mkern-1mu{\vrule width0pt height 1.7ex}^\mathrm{\,o}_j$, and on $\cup\boldsymbol{\varepsilon}_j$, perform the trivial (constant) isotopy. We can now conclude $[\phi']=[\psi']$, as required.

It is easily verified that $(\phi\circ\psi)'=\phi'\circ\psi'$, which implies $\mathfrak{G}$ is a homomorphism. It should also be noted that the range of $\mathfrak{G}$ lies entirely inside $\mathrm{FMod}(S)$. Indeed, given any homeomorphism $\phi$ on $S_{\mathrm{cut}}$, $\phi$ must fix $\partial (S_{\mathrm{cut}})$ pointwise, and therefore $\phi'$ must fix $\cup\boldsymbol{\varepsilon}_j$ pointwise as required. As a summary of the three maps $\mathfrak{C}$, $\mathfrak{K}$, $\mathfrak{G}$ we've constructed so far, we may now consider the following diagram, which is easily verified to commute:

\vspace{4pt}

\begin{center}
\begin{tikzcd}[row sep=small,every label/.append style={font=\normalfont}]
&\mathrm{FMod}(S) \arrow[dd, "\mathfrak{K}"]\\
  \mathrm{Mod}(S_{\mathrm{cut}})\arrow[ur,"\mathfrak{G}"]\arrow[dr, "\mathfrak{C}"']  \\
 &\mathrm{FMod}(S\mkern-1mu{\vrule width0pt height 1.7ex}^\mathrm{\,o}_{\mathrm{cut}})
\end{tikzcd}
\end{center}
\subsection{The Birman-Hilden theorem}
\label{Section 4.2}
 Let $\iota$ denote the homeomorphism of the surface $S^2_k$ given by the following rotation:
 
\begin{center}          \includegraphics[width=0.60\textwidth]{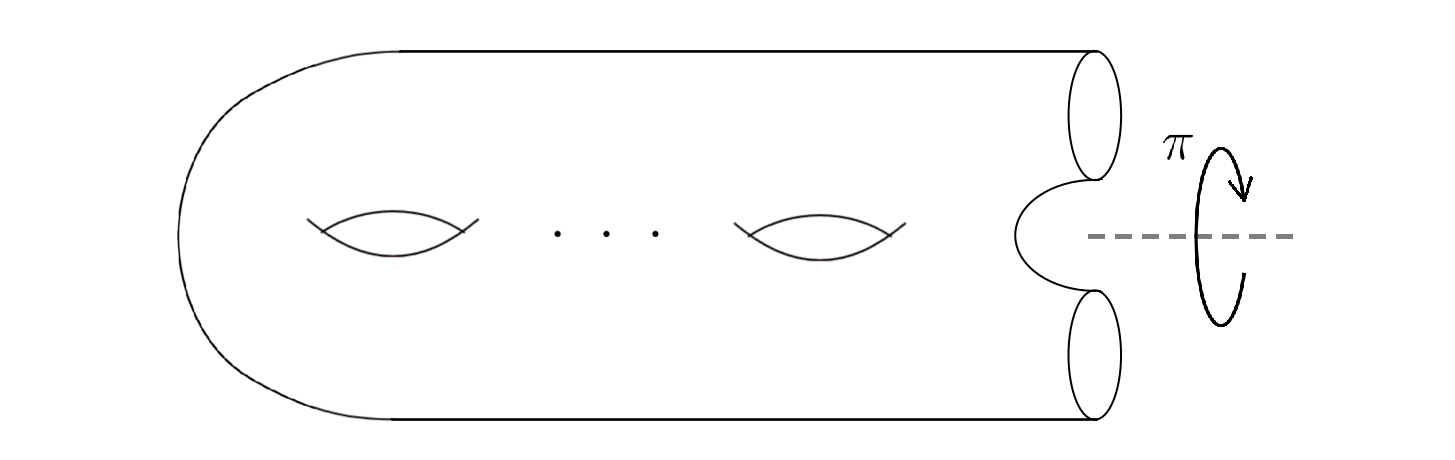}  
\end{center}
 
Note $[\iota]$ is not a mapping class in this case, since $\iota$ does not fix $\partial (S_k^2)$. As a Deck transformation, $\iota$ induces a branched covering $p:S_k^2\longrightarrow D_{2k+2}$ onto the closed disk with $2k+2$ marked points. This is called the Birman-Hilden double cover \cite[Section 9.4]{Primer}. We give the genus 2 example below.

\vspace{2pt}

\begin{center}    \includegraphics[width=0.92\textwidth]{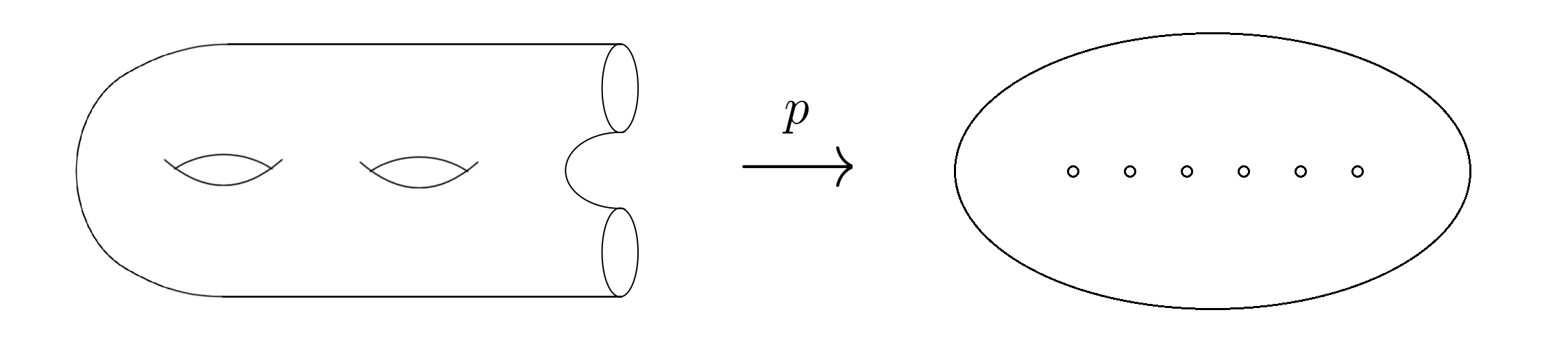}  
\end{center}

The Birman-Hilden double cover simply takes each point in $S^2_k$ to its orbit with respect to the action of $\iota$.
Notice that each marked point on the disk corresponds to a point of intersection between the surface $S_k^2$ and its axis of rotation. These are the branch points of this cover, that is, the points which are not evenly covered by $p$.
For the following theorem, which is originally due to Joan Birman and Hugh Hilden \cite{BirmanHilden}, we let $\mathrm{SMod}(S_k^2)$ denote the symmetric mapping classes on $S$ with respect to $p$ (recall the definition in \blue{Section \ref{Section 1.2}}), and let $B_{2k+2}$ denote the standard braid group on $2k+2$ strands.
\begin{theorem}[the Birman-Hilden theorem for $S_k^2$]
\label{Theorem 4.1}
We have that $\mathrm{SMod}(S_k^2)\cong B_{2k+2}$.
\end{theorem}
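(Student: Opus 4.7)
The plan is to produce a descent homomorphism $\Phi_* : \mathrm{SMod}(S_k^2) \to \mathrm{Mod}(D_{2k+2})$ induced by the branched cover $p$, and then combine it with the classical Artin identification $\mathrm{Mod}(D_{2k+2}) \cong B_{2k+2}$ (see \cite[Section 9.1.3]{Primer}). At the level of homeomorphisms, every fiber-preserving $\phi \in \mathrm{SHomeo}^+(S_k^2)$ factors through $p$ to give a homeomorphism $\bar{\phi}$ of $D_{2k+2}$ that permutes the marked points. Since $\phi$ fixes $\partial S_k^2$ pointwise and $p|_{\partial S_k^2}$ maps onto $\partial D_{2k+2}$, the map $\bar{\phi}$ fixes $\partial D_{2k+2}$ pointwise and therefore represents a genuine element of $\mathrm{Mod}(D_{2k+2})$. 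The whole theorem reduces to showing that this assignment induces an isomorphism $\Phi_*$ at the level of $\mathrm{Mod}$.

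The main obstacle is the well-definedness of $\Phi_*$, and it relies crucially on Birman and Hilden's original isotopy lemma \cite{BirmanHilden}, \cite[Section 9.4.1]{Primer}. The subtlety is that an isotopy witnessing $[\phi] = 1$ in $\mathrm{Mod}(S_k^2)$ may pass through non-symmetric homeomorphisms, and therefore cannot be projected through $p$. The Birman-Hilden lemma supplies exactly what is needed: any two symmetric homeomorphisms which are isotopic in the usual sense are in fact isotopic through symmetric homeomorphisms (with the isotopy fixing the boundary pointwise). Projecting such a symmetric isotopy from $\phi$ to $\mathrm{id}$ yields an honest isotopy from $\bar{\phi}$ to $\mathrm{id}$ in the marked disk, so $\Phi_*[\phi] = 1$ as required. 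I would quote this isotopy lemma as a black box.

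For injectivity of $\Phi_*$, suppose $\Phi_*[\phi] = 1$. Choose an isotopy of $\bar{\phi}$ to the identity of $D_{2k+2}$ and lift it to $S_k^2$ via path lifting in the cover; the endpoint covers $\mathrm{id}_{D_{2k+2}}$ and so is either $\mathrm{id}$ or the deck transformation $\iota$. But, as highlighted in the excerpt, $\iota$ does not fix $\partial S_k^2$ pointwise, so $\iota \notin \mathrm{Mod}(S_k^2)$ at all, and the endpoint must be the identity, forcing $[\phi] = 1$. For surjectivity, I would exhibit explicit lifts of the standard braid generators: each arc in $D_{2k+2}$ joining two consecutive marked points has $\iota$-invariant preimage a simple closed curve $\gamma_i \subset S_k^2$, and the Dehn twist $T_{\gamma_i}$ is a symmetric mapping class with $\Phi_*(T_{\gamma_i}) = \sigma_i$ for $i = 1, \ldots, 2k+1$. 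Since the $\sigma_i$ generate $B_{2k+2}$, $\Phi_*$ is onto, and the theorem follows.
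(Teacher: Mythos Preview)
Your sketch is correct and is precisely the standard Birman--Hilden argument; note, however, that the paper does not actually prove this theorem at all but simply cites \cite[Section 9.4.3]{Primer}, so there is no ``paper's own proof'' to compare against beyond that reference. What you have written is essentially a faithful outline of the argument given there: descend fiber-preserving homeomorphisms through $p$, invoke the Birman--Hilden isotopy lemma for well-definedness, rule out $\iota$ as the endpoint of a lifted isotopy using the boundary, and realize the Artin generators as Dehn twists about the preimage curves $\overline{\gamma}_i$.
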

\begin{proof}
    See \cite[Section 9.4.3]{Primer}.
\end{proof}

The intuition behind this result is that each generating Dehn twist in $\mathrm{SMod}(S_k^2)$ is the lift of a half-twist on $D_{2k+2}$ under $p$, and each half-twist corresponds to a crossing of two adjacent strands in $B_{2k+2}$. We give the genus 2 example below. For more background on this, see \cite[Section 9.4.1]{Primer} and \cite[Section 18.4]{Office}.

\begin{center}  
  \captionof{figure}{Each half-twist $H_{\gamma_i}$ lifts to a Dehn twist $T_{\overline{\gamma}_i}.$}
  \includegraphics[width=0.91\textwidth]{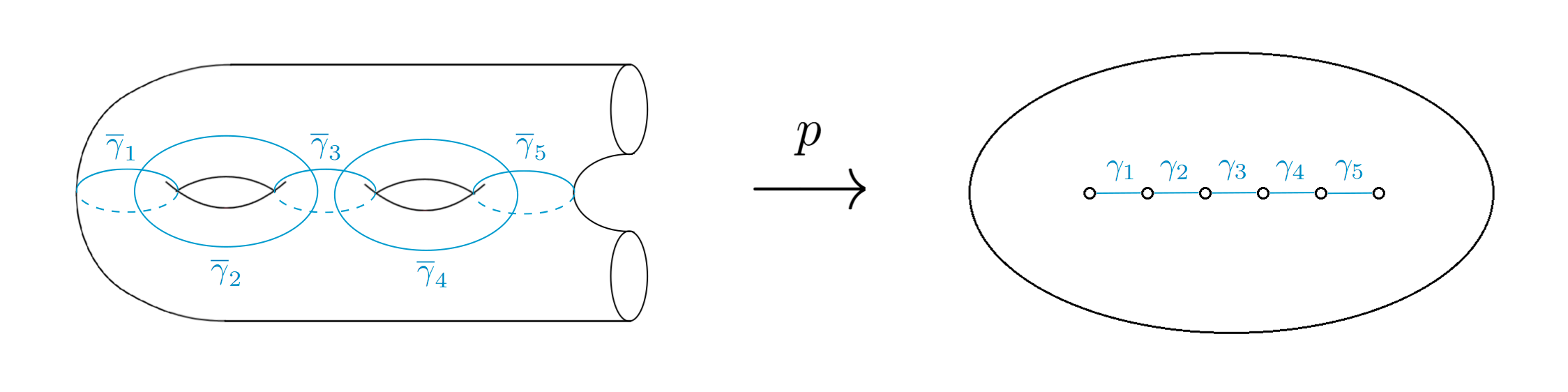}  
  \label{Figure 13}
\end{center}

\vspace{-4pt}

It should be noted that in this case, $\mathrm{SMod}(S_k^2)$ can equivalently be defined as the subgroup of $\mathrm{Mod}(S_k^2)$ consisting of all mapping classes containing a representative homeomorphism which commutes with $\iota$ (see \cite[Section 9.4.1]{Primer}).

From now on, we will write $F_i\coloneq T_{\overline{\gamma}_i}$ for all $i\in\{1,...,2k+1\}$ as a shorthand for the generators of $\mathrm{SMod}(S_k^2)$. By the Birman-Hilden theorem, we may in fact use the standard braid group presentation. Hence the relations in $\mathrm{SMod}(S_k^2)$ are
\begingroup
\setequationspacing
\begin{align}
[F_i,F_j]&=1 \hspace*{.25in}\text{ for } |i-j|>2, \nonumber \\
F_iF_{i+1}F_i=F_{i+1}F_i&F_{i+1} \hspace*{.25in}\text{ for } 1\leq i\leq 2k.
\end{align}
\endgroup

\vspace{7pt}

\subsection{Computing $\Theta_n^k$ for $n\geq3$}
\label{Section 5.3}
We will now combine results from the two previous sections to determine a group presentation for $\Theta_n^k$ whenever $n\geq3$. We will require a lemma (\blue{Lemma \ref{Lemma 4.2}}) which embeds $\mathrm{SMod}(S_k^2)$ inside $\mathrm{Mod}(S_{\mathrm{cut}})$ to make things easier.

Let $i\in\{1,...,2k+1\}$ and recall the definition of the mapping class $f_i\in\mathrm{FMod}(S)$ outlined at the beginning of \blue{Section \ref{Section 4}}. An analogous mapping class $\widehat{f}_i$ can be defined in $\mathrm{Mod}(S_{\mathrm{cut}})$ as follows. Consider the following simple closed curves in $S_{\mathrm{cut}}$:

\begin{center}  
  \includegraphics[width=0.67\textwidth]{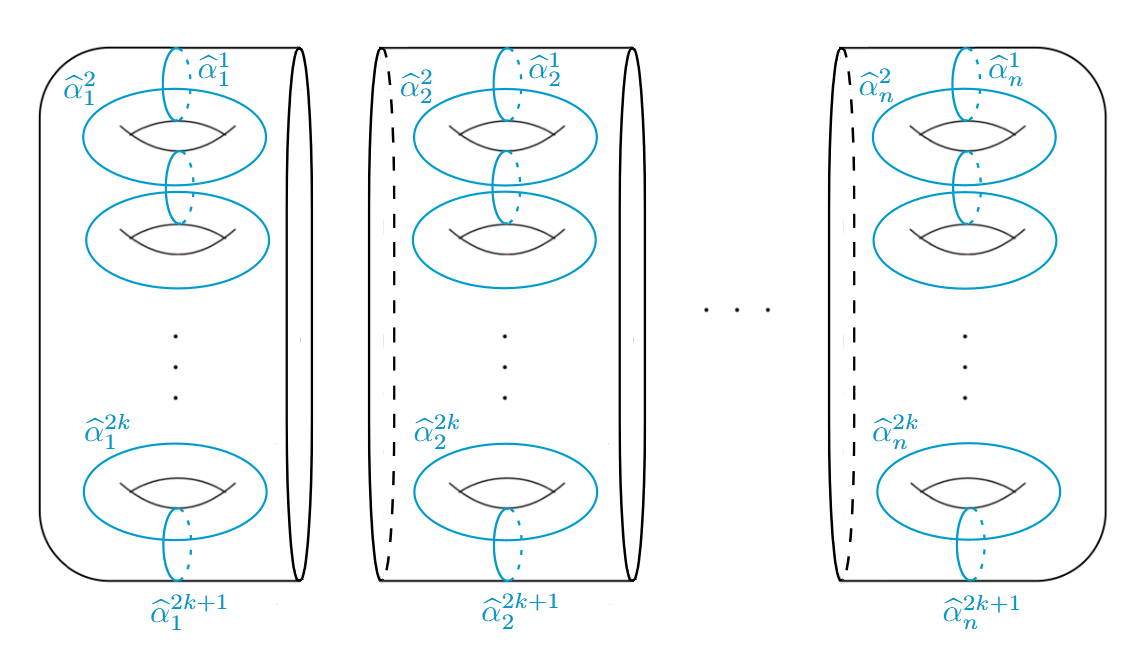}  
\end{center}

Then, define $\widehat{f}_i$ to be the mapping class in $\mathrm{Mod}(S_{\mathrm{cut}})$ given by
\[
\widehat{f}_i\coloneq T_{\widehat{\alpha}_1^{\,i}}^{-1}\circ T_{\widehat{\alpha}_2^{\,i}}\circ T_{\widehat{\alpha}_3^{\,i}}^{-1}...\circ T_{\widehat{\alpha}_n^{\,i}}^{(-1)^n}.
\]
With this in mind, we let $X\coloneq\{F_1,...,F_{2k+1}\}$ be the set of generators of $\mathrm{SMod}(S_k^2)$ from the presentation (3.1), and define $w:X\longrightarrow\mathrm{Mod}(S_{\mathrm{cut}})$ to be the map given by $F_i\longmapsto\widehat{f}_i$.
\begin{lemma}
\label{Lemma 4.2}
    Let $k,n\in\mathbb{N}$ such that $n\geq3$. Then, $w$ extends to an injective homomorphism $\omega:\mathrm{SMod}(S^k_2)~\longrightarrow~\mathrm{Mod}(S_{\mathrm{cut}})$.
\end{lemma}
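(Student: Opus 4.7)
The plan is in two parts: first build $\omega$ as a well-defined homomorphism by checking that the braid-group relations from presentation (3.1) are satisfied by the elements $\widehat{f}_i$, and then deduce injectivity by projecting to a middle component of $S_{\mathrm{cut}}$.

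Since $S_{\mathrm{cut}}=M_1\sqcup\cdots\sqcup M_n$ is a disjoint union of connected surfaces, the natural isomorphism $\mathrm{Mod}(S_{\mathrm{cut}})\cong\prod_{j=1}^{n}\mathrm{Mod}(M_j)$ lets me decompose $\widehat{f}_i$ coordinatewise as the tuple of Dehn twists $\bigl(T_{\widehat{\alpha}_j^{\,i}}^{(-1)^j}\bigr)_{j=1}^{n}$. From the picture of the curves $\widehat{\alpha}_j^{\,i}$, for each fixed $j$ the family $\widehat{\alpha}_j^{\,1},\ldots,\widehat{\alpha}_j^{\,2k+1}$ forms a chain in $M_j$: consecutive curves intersect transversally once and non-consecutive ones are disjoint. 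The standard disjointness and braid relations for Dehn twists \cite[Section 3.5.1]{Primer} therefore apply inside each $\mathrm{Mod}(M_j)$, and both families of relations are preserved under inversion of their generators. Assembling the componentwise identities yields exactly the two families of relations from presentation (3.1) for the $\widehat{f}_i$, so $w$ extends to a homomorphism $\omega:\mathrm{SMod}(S_k^2)\longrightarrow\mathrm{Mod}(S_{\mathrm{cut}})$.

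For injectivity, the hypothesis $n\geq 3$ is used to guarantee that there is at least one middle component of $S_{\mathrm{cut}}$, i.e.\ an index $l\in\{1,\ldots,n-2\}$ with $M_{l+1}\cong S_k^2$. Let $\pi:\mathrm{Mod}(S_{\mathrm{cut}})\longrightarrow\mathrm{Mod}(M_{l+1})$ be the corresponding coordinate projection. Via a homeomorphism $M_{l+1}\to S_k^2$ matching the chain $\widehat{\alpha}_{l+1}^{\,1},\ldots,\widehat{\alpha}_{l+1}^{\,2k+1}$ with the Birman-Hilden chain $\overline{\gamma}_1,\ldots,\overline{\gamma}_{2k+1}$ from Figure \ref{Figure 13}, the composition $\pi\circ\omega$ sends each generator $F_i$ to $F_i^{\,(-1)^{l+1}}$. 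The assignment $F_i\mapsto F_i^{\pm 1}$ visibly preserves both relations in (3.1) and is its own inverse, hence extends to an automorphism of $\mathrm{SMod}(S_k^2)\cong B_{2k+2}$. This shows $\pi\circ\omega$ is injective, and therefore so is $\omega$.

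The main technical point will be the topological comparison of the curves $\widehat{\alpha}_j^{\,i}$ inside each $M_j$ with a standard chain of the expected type --- in particular, verifying that the restriction of the chain to a middle component $(S_k^2)_l$ really coincides, up to a homeomorphism of $S_k^2$, with the Birman-Hilden chain on that surface. Once this comparison is extracted from the figures, the rest of the argument is purely formal.
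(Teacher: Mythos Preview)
Your proof is correct and follows essentially the same approach as the paper: check the braid relations componentwise using the product decomposition $\mathrm{Mod}(S_{\mathrm{cut}})\cong\prod_j\mathrm{Mod}(M_j)$, then project to a middle two-boundary component to deduce injectivity. The paper is slightly more economical in that it projects specifically to $M_2$, where the sign $(-1)^2=+1$ makes $\pi\circ\omega$ literally the identity $F_i\mapsto F_i$, so your automorphism argument for $F_i\mapsto F_i^{\pm1}$ becomes unnecessary.
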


\begin{proof}
  Let $i\in\{1,...,2k+1\}$ and notice that the curve $\widehat{\alpha}^{\,i}_1$ may be seen as a curve sitting in the surface $S_k^1$. Let $\dot{F_i}$ denote the Dehn twist in $\mathrm{Mod}(S_k^1)$ about this curve. It is clear that $\widehat{f}_i$ can be expressed as a tuple of mapping classes
\[
\widehat{f}_i=(\dot{F}_i^{-1},F_i,F_i^{-1},...,F_i^{(-1)^{n-1}},\dot{F}_i^{(-1)^{n}}).
\]
Certainly, the group $\langle \dot{F}_1,...,\dot{F}_{2k+1}\rangle$ satisfies the disjointness and braid relations specified in the presentation (3.1), just as $\langle F_1,...,F_{2k+1}\rangle$ does. Then, $\langle \widehat{f}_1,...,\widehat{f}_{2k+1}\rangle$ satisfies all the relations that $\mathrm{SMod}(S_k^2)$ does, and therefore, $w$ extends to a well-defined homomorphism $\omega:\mathrm{SMod}(S^k_2)\longrightarrow\mathrm{Mod}(S_{\mathrm{cut}})$.

Now, let $r\in\mathrm{SMod}(S^k_2)$. Notice that $\omega(r)$ is an $n$-tuple of mapping classes which we may write as $(*,*,...,*)$ for convenience, where $*$ denotes a placeholder for each entry. With this in mind, suppose $w(r)=1$. Since $n\geq3$, the second entry in the $n$-tuple $\omega(r)$ is precisely $r$, so we may write this as $(*,r,*,...,*)=(1,1,1,...,1)$, meaning in particular, $r=1$. Hence, $\omega$ is injective. 
\end{proof}

We may now incorporate our new map $\omega$ into the cutting, capping and gluing diagram from \blue{Section \ref{Section 4.1}}:
\begin{center}
\begin{tikzcd}[row sep=small,every label/.append style={font=\normalfont}]
&&\mathrm{FMod}(S) \arrow[dd, "\mathfrak{K}"]\\
\mathrm{SMod}(S_k^2)\arrow[r, hook, "\omega"]  &\mathrm{Mod}(S_{\mathrm{cut}})\arrow[ur,"\mathfrak{G}"]\arrow[dr, "\mathfrak{C}"']  \\
 &&\mathrm{FMod}(S\mkern-1mu{\vrule width0pt height 1.7ex}^\mathrm{\,o}_{\mathrm{cut}})  
\end{tikzcd}
\end{center}

For this final theorem, $Z(B_{2k+2})$ denotes the center of $B_{2k+2}$.
\begin{theorem}
\label{Theorem 4.3}
    Let $k,n\in\mathbb{N}$ with $n\geq3$. Then, $\Theta_n^k\cong B_{2k+2}/Z(B_{2k+2})$.
\end{theorem}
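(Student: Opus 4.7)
The plan is to construct a surjective homomorphism $\pi : B_{2k+2} \twoheadrightarrow \Theta_n^k$ and show that $\ker(\pi) = Z(B_{2k+2})$. Using the Birman-Hilden identification $B_{2k+2} \cong \mathrm{SMod}(S_k^2)$, set $\pi \coloneq \mathfrak{G} \circ \omega$. By construction $\pi(F_i) = f_i$, so the image is exactly $\Theta_n^k$, and by the first isomorphism theorem the problem reduces to computing $\ker(\pi)$.

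For the inclusion $Z(B_{2k+2}) \subseteq \ker(\pi)$, recall that $Z(B_{2k+2})$ is cyclic, generated by the full twist $\Delta^2 = (F_1 \cdots F_{2k+1})^{2k+2}$, and that under Birman-Hilden $\Delta^2$ corresponds (via the chain relation on the curves $\overline{\gamma}_i$) to the product $T_{\partial_1} T_{\partial_2}$ of Dehn twists about the two boundary components of $S_k^2$. Since $F_i \mapsto F_i^{-1}$ extends to an automorphism $\phi$ of $B_{2k+2}$ satisfying $\phi(\Delta^2) = \Delta^{-2}$, the $j$-th entry of $\omega(\Delta^2)$ is $(T_{\partial_1^{(j)}} T_{\partial_2^{(j)}})^{(-1)^j}$ on each middle $S_k^2$-piece, and by the chain relation on $S_k^1$ is $T_\partial^{(-1)^j}$ on each end piece. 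Under $\mathfrak{G}$, each glued curve $\boldsymbol{\varepsilon}_j$ receives one twist contribution from $M_j$ with exponent $(-1)^j$ and one from $M_{j+1}$ with exponent $(-1)^{j+1}$, and these opposite signs cancel, giving $\pi(\Delta^2) = 1$.

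For the reverse inclusion $\ker(\pi) \subseteq Z(B_{2k+2})$, I would exploit the commutative diagram $\mathfrak{K} \circ \mathfrak{G} = \mathfrak{C}$ from Section 4.1. If $g \in \ker(\pi)$ then $\mathfrak{C}(\omega(g)) = \mathfrak{K}(\pi(g)) = 1$, and since $\mathfrak{C}$ acts factor-by-factor with kernel generated by the boundary Dehn twists on each piece, every entry $\omega(g)_j$ lies in the subgroup of $\mathrm{Mod}(M_j)$ generated by boundary twists. Pick a middle piece $M_j = S_k^2$ (which exists since $n \geq 3$): the entry $\omega(g)_j$ automatically lies in $\mathrm{SMod}(S_k^2)$, so it sits in $\mathrm{SMod}(S_k^2) \cap \langle T_{\partial_1^{(j)}}, T_{\partial_2^{(j)}}\rangle$. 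Because $\iota$ swaps the two boundary components, the $\iota$-fixed subgroup of $\mathbb{Z}^2 = \langle T_{\partial_1^{(j)}}, T_{\partial_2^{(j)}}\rangle$ is precisely the diagonal $\langle T_{\partial_1^{(j)}} T_{\partial_2^{(j)}}\rangle = \langle \Delta^2 \rangle$; since $\phi$ preserves this cyclic subgroup, we conclude $g \in \langle \Delta^2 \rangle = Z(B_{2k+2})$.

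The main technical obstacle will be the sign and orientation bookkeeping in the cancellation verifying $\pi(\Delta^2) = 1$: one must carefully track the alternating exponents in $\omega$, the inverting action $\phi(\Delta^2) = \Delta^{-2}$, and how boundary Dehn twists transform under $\mathfrak{G}$, while separately confirming the boundary twist content on the end $S_k^1$-pieces via the chain relation.
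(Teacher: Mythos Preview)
Your proposal is correct and follows essentially the same route as the paper. You define $\pi=\mathfrak{G}\circ\omega$ (the paper's $\mathfrak{g}$), verify $Z(B_{2k+2})\subseteq\ker(\pi)$ via the same boundary-twist cancellation under gluing, and obtain the reverse inclusion by pushing through $\mathfrak{K}\circ\mathfrak{G}=\mathfrak{C}$ and reading off the second coordinate of $\omega(g)$ inside $\ker(\mathfrak{C})=\langle T_1,T_2\rangle^{\text{per factor}}$, then intersecting with $\mathrm{SMod}(S_k^2)$ to land in $\langle T_1T_2\rangle$. The paper packages this slightly differently---first computing $\ker(\mathfrak{c})=N$ with $\mathfrak{c}=\mathfrak{C}\circ\omega$, then separately showing $\mathfrak{K}$ is injective on $\Theta_n^k$---but the ingredients (the commuting triangle, the kernel of capping, the $\iota$-symmetric diagonal in $\langle T_1,T_2\rangle$, and the chain relation identifying $T_1T_2$ with the full twist) are identical; your organization is marginally more direct since it computes $\ker(\pi)$ in one pass rather than detouring through $\mathfrak{K}(\Theta_n^k)$.
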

\begin{proof} Notice that $\Theta_n^k$ is a subgroup of $\mathrm{FMod}(S)$, because the mapping class $f_i$ fixes $\cup\boldsymbol{\varepsilon}_j$ pointwise for all $i\in\{1,...,2k+1\}$, so that by extension, any composition of such mapping classes also fixes $\cup\boldsymbol{\varepsilon}_j$ pointwise. Hence, the group $\mathfrak{K}(\Theta_n^k)\leq \mathrm{FMod}(S\mkern-1mu{\vrule width0pt height 1.7ex}^\mathrm{\,o}_{\mathrm{cut}})$ can be considered. We will first show that $\mathfrak{K}(\Theta_n^k)\cong B_{2k+2}/Z(B_{2k+2})$, and then we will show that $\mathfrak{K}$ restricts to an isomorphism on $\Theta_n^k$, thus completing the proof. 

Since $n\geq 3$, we may consider the injection $\omega$ by \blue{Lemma \ref{Lemma 4.2}}. Let $\mathfrak{c}\coloneq\mathfrak{C}\circ\omega$. We will show that the image of $\mathfrak{c}$ is $\mathfrak{K}(\Theta_n^k)$. Indeed, the image of $\omega$ is $\langle \widehat{f}_1,...,\widehat{f}_{2k+1}\rangle~\eqcolon~\widehat{\Theta}_n^k$ by construction, and the diagram above commutes, so 
\[
\mathfrak{C}(\widehat{\Theta}_n^k)=\mathfrak{K}\circ\mathfrak{G}(\widehat{\Theta}_n^k)=\mathfrak{K}(\Theta_n^k).
\]
Next, we would like to determine the kernel of $\mathfrak{c}$, which is equivalent to computing  $\omega^{-1}(\mathrm{ker}(\mathfrak{C}))$. We know from \cite[Section 3.6.2]{Primer} that the kernel of any capping homomorphism is generated by the Dehn twists about each boundary component that gets capped. So in our case, $\mathrm{ker}(\mathfrak{C})$ is generated by Dehn twists about each boundary component of every connected component $M_j$. This group is isomorphic to $\mathbb{Z}^{2n-2}$.

Let $T_1,T_2\in\mathrm{Mod}(S_k^2)$ denote the Dehn twists about the two boundary components of $S^2_k$ respectively. Assume $r\in\omega^{-1}(\mathrm{ker}(\mathfrak{C}))$. Then, $\omega(r)\in\mathrm{ker}(\mathfrak{C})$, so in particular the second entry in the tuple $\omega(r)$ lies inside $\langle T_1, T_2\rangle$. But the second entry in $\omega(r)$ is $r$, hence $r\in\langle T_1, T_2\rangle$. By assumption, we also have $r\in\mathrm{SMod}(S_k^2)$, so we conclude $r\in\mathrm{SMod}(S_k^2)\cap\langle T_1, T_2\rangle\eqcolon N$. Moreover, we clearly have $\omega(N)\subseteq\mathrm{ker}(\mathfrak{C})$, which means $N\subseteq\omega^{-1}(\mathrm{ker}(\mathfrak{C}))$, so we have shown by double inclusion that
\[
N=\omega^{-1}(\mathrm{ker}(\mathfrak{C}))=\mathrm{ker}(\mathfrak{c}).
\]
We now find a better description for $N$. Notice that $\langle T_1, T_2\rangle$ is isomorphic to $\mathbb{Z}^2$ via the identification $T_1\longmapsto (1,0)$ and $T_2\longmapsto (0,1)$. Let $\widehat{T}_1,\widehat{T_2}$ denote the standard representative homeomorphisms of the Dehn twists $T_1$ and $T_2$ respectively. Notice that
\[
\widehat{T}_1\circ\iota=\iota\circ\widehat{T}_2,
\]
which implies that for any $x,y\in\mathbb{Z}$,
\[
\widehat{T}_1^{\,x}\circ\widehat{T}_2^{\,y}\circ\iota=\iota\circ\widehat{T}_1^{\,y}\circ\widehat{T}_2^{\,x}.
\]
Therefore if $x=y$ Then the mapping class $T_1^{\,x}\circ T_2^{\,y}$ has a representative which commutes with $\iota$. However, if $x\neq y$ it is clear that $T_1^{\,x}\circ T_2^{\,y}$ has no representatives which commute with $\iota$. We conclude  $T_1\circ T_2$ generates all the symmetric mapping classes on $S^2_k$ that can be found inside $\langle T_1,T_2\rangle$, or in other words,
    \[
    N\coloneq\mathrm{SMod}(S_k^2)\cap\langle T_1, T_2\rangle=\langle T_1\circ T_2\rangle.
    \]
Furthermore, we know by a chain relation \cite[Section 4.4.1]{Primer} that 
\[
T_1\circ T_2 = (F_1F_2...F_{2k+1})^{2k+2}.
\]
The Birman-Hilden theorem (\blue{Theorem \ref{Theorem 4.1}}) tells us  
that $\mathrm{SMod}(S_k^2)\cong B_{2k+2}$, and with this identification in mind, the element $(F_1F_2...F_{2k+1})^{2k+2}$ actually corresponds to the full-twist braid in $B_{2k+2}$, which generates the center of $B_{2k+2}$ cyclically \cite[Theorem 18.4]{Office}. Thus, the image of $N$ under this identification is $Z(B_{2k+2})$. Now, recall that
\[
\mathfrak{K}(\Theta_n^k)=\mathrm{im}(\mathfrak{c})\cong\mathrm{SMod}(S_k^2)/\mathrm{ker}(\mathfrak{c})=\mathrm{SMod}(S_k^2)/N,
\]
And so we conclude
\[
\mathfrak{K}(\Theta_n^k)\cong B_{2k+2}/Z(B_{2k+2}).
\]
It remains to show that $\mathfrak{K}$ restricts to an isomorphism on $\Theta_n^k$. Let $\mathfrak{g}\coloneq\mathfrak{G}\circ\omega$. By construction, we have that the following diagram commutes.
\vspace{3mm}
\begin{center}
\begin{tikzcd}[row sep=small,every label/.append style={font=\normalfont}]
&&\mathrm{FMod}(S) \arrow[dd, "\mathfrak{K}"]\\
\mathrm{SMod}(S_k^2)\arrow[r, hook, "\omega"] \arrow[urr, bend left=15, "\mathfrak{g}"] \arrow[drr, bend right=15, "\mathfrak{c}"']  &\mathrm{Mod}(S_{\mathrm{cut}})\arrow[ur,"\mathfrak{G}"]\arrow[dr, "\mathfrak{C}"']  \\
 &&\mathrm{FMod}(S\mkern-1mu{\vrule width0pt height 1.7ex}^\mathrm{\,o}_{\mathrm{cut}}) 
\end{tikzcd}
\end{center}
\vspace{4mm}
 Let $T_{\Delta}\coloneq T_1\circ T_2\in N$ and let $\delta\coloneq\partial(S_k^1)$. Notice that
\begin{align}
\omega(T_{\Delta})=(T_{\delta}^{-1},T_{\Delta},T_{\Delta}^{-1},...,T_{\Delta}^{(-1)^{n-1}},T_{\delta}^{(-1)^n}).
\end{align}
We can deduce from this that $\mathfrak{g}(T_{\Delta})=\mathfrak{G}\circ\omega(T_{\Delta})=1$, because when the rightmost boundary component of $M_j$ is glued to the leftmost boundary component of $M_{j+1}$ for all $j\in\{1,...,n-1\}$, the Dehn twists in the tuple (3.2) cancel out with each other. Since $T_{\Delta}$ generates $N$, we conclude $\mathfrak{g}(N)=1$.

We will now show that $N=\mathfrak{g}^{-1}(\mathrm{ker}(\mathfrak{K}))$. Since $1\in\mathrm{ker}(\mathfrak{K})$, we know from the previous paragraph that $N\subseteq\mathfrak{g}^{-1}(\mathrm{ker}(\mathfrak{K}))$. Conversely, assume $r\in\mathfrak{g}^{-1}(\mathrm{ker}(\mathfrak{K}))$. Then $\mathfrak{g}(r)\in\mathrm{ker}(\mathfrak{K})$, but since the above diagram commutes, this means
\[
\mathfrak{c}(r)=\mathfrak{K}\circ\mathfrak{g}(r)=1.
\]
In other words, $r\in\mathrm{ker}(\mathfrak{c})=N$, as required. 

We are now ready for our final maneuver. Write $H\coloneq \mathrm{SMod}(S_k^2)$ for notational convenience. Clearly $\mathfrak{g}(H)=\Theta_n^k$, and so
\[
\mathfrak{g}^{-1}(\mathrm{ker}(\mathfrak{K})\cap\Theta_n^k)=\mathfrak{g}^{-1}(\mathrm{ker}(\mathfrak{K}))\cap\mathfrak{g}^{-1}(\Theta_n^k)=N\cap H=N.
\]
Furthermore, we have that
    \begin{alignat*}{2}
    &&\mathfrak{g}^{-1}(\mathrm{ker}(\mathfrak{K})\cap\Theta_n^k)
    &=N\\
    &\implies\quad
    &\mathfrak{g}\circ\mathfrak{g}^{-1}(\mathrm{ker}(\mathfrak{K})\cap\Theta_n^k)
    &=\mathfrak{g}(N)\\
    &\implies\quad
    &\mathfrak{g}(H)\cap(\mathrm{ker}(\mathfrak{K})\cap\Theta_n^k)
    &=1\\
    &\implies\quad
    &\mathrm{ker}(\mathfrak{K})\cap\Theta_n^k
    &=1.
    \end{alignat*}
This shows that $\mathfrak{K}$ restricts to an isomorphism on $\Theta_n^k$, and the proof is complete.
\end{proof}
We now have a suitable group presentation for $\Theta_n^k$ whenever $n\geq3$: simply add the full-twist braid relator to the presentation (3.1). Our generators are $f_1$,...,$f_{2k+1}$, and the relations are
\begingroup
\setequationspacing
\begin{align*}
[f_i,f_j]&=1 \hspace*{.25in}\text{ for } |i-j|>2,\\
f_if_{i+1}f_i=f_{i+1}f_i&f_{i+1} \hspace*{.25in}\text{ for } 1\leq i\leq 2k,\\
(f_1f_2...f_{2k+1})^{2k+2}&=1.
\end{align*}
\endgroup
Notice that the resulting presentation does not itself depend on $n$. 

One consequence of this theorem is that $\Theta_n^k\cong \mathrm{F_1Mod}(S_{0,2k+3})$, where $\mathrm{F_1Mod}(S_{0,2k+3})$ denotes the group of mapping classes on the punctured sphere $S_{0,2k+3}$ which fix one distinguished puncture. This is verified easily by applying the capping homomorphism to a punctured disk $D_{2k+2}$ and obtaining a punctured sphere $S_{0,2k+3}$ with one extra puncture which remains fixed. The desired result then follows from the fact that $\mathrm{Mod}(D_{2k+2})\cong B_{2k+2}$. See \cite[Section 9.2]{Primer} for more background on this.

Another consequence of this result is that, since the center of any standard braid group is isomorphic to $\mathbb{Z}$, we have the following short exact sequence
\[
1\longrightarrow \mathbb{Z} \longrightarrow B_{2k+2} \longrightarrow \Theta^k_n \longrightarrow 1
\]
for every $k,n\in\mathbb{N}$ with $n\geq 3$. In \blue{Section \ref{Section 7}}, we show that $\Theta_2^k$ is not isomorphic to $\Theta_n^k$ when $n\geq 3$, and further discuss the potential classification of all groups of type $\Theta_n^k$.

\vspace{11pt}

\section{Expressing hyperelliptic involutions in terms of braids}
\label{Section 5}

Suppose $S$ is a closed surface of genus $n$, so that the group $\Theta_n^1$ may be seen as a subgroup of $\mathrm{Mod}(S)$. In this section, we construct a hyperelliptic involution on the surface $S$ by using the generators of $\Theta_n^1$. If we write $a\coloneq f_1$, $b\coloneq f_2$, and $c\coloneq f_3$, then, as proven in the two previous sections, we obtain the following presentations:
\begin{align}
\Theta_n^1=\langle\,a,b,c,\,|\,aba=bab,\,\,cbc=bcb,\,\,ac=ca,\,\,(abc)^4=1\,\rangle\cong B_4/Z(B_4)
\end{align}
for all $n\geq 3$, and
\begin{align}
\Theta_1^1\cong\Theta_2^1=\langle\,a,b\,|\,aba=bab,\,\,(ab)^6=1\,\rangle\cong\mathrm{SL}_2(\mathbb{Z}),
\end{align}
keeping in mind that for surfaces of genus 1 or 2, the generators $a$ and $c$ are equal, which is why the latter is omitted from presentation (4.2).

In the upcoming theorem, we will show that, regardless of genus, the mapping class $(abc)^2\in\mathrm{Mod}(S)$ is always equal to a hyperelliptic involution. For genus 1 and 2, this becomes the element $(aba)^2$ inside $\mathrm{SL}_2(\mathbb{Z})$, which yields the matrix $-\mathrm{Id}_2\in\mathrm{SL}_2(\mathbb{Z})$. The consequences of this are discussed in \blue{Section \ref{Section 5.1}}. As for genus 3 and above, since $\Theta_n^1$ is a braid group modulo its center $\langle(abc)^4\rangle$, the word $(abc)^2$ can be expressed as a braid. Crucially, it becomes the square root of the full-twist braid $(abc)^4$ by which we are modding out. Said full-twist braid can be visualized with the following diagram (taken from \cite[Figure 18.11]{Office}), where $a,b,$ and $c$ correspond to bottom, middle and top crossings respectively.

\begin{center}  
  \includegraphics[width=0.52\textwidth]{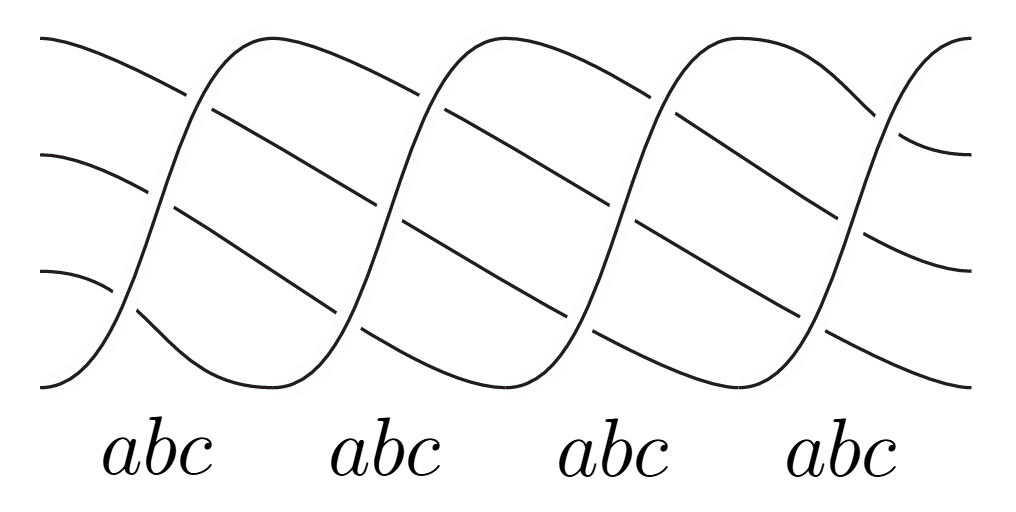}  
\end{center}

\vspace{-3pt}

This is the braid one obtains by taking the identity braid and then spinning the entire right wall by 360 degrees \cite[Section 18.2]{Office}. Thus, the hyperelliptic involution $(abc)^2$ inside $\Theta_n^1$ may be seen as the half-twist braid obtained by doing the same thing but only spinning the wall by 180 degrees. 

\newpage

\vspace{7pt}

\begin{center}  
\includegraphics[width=0.27\textwidth]{images/braid_2.png}  
\end{center}

\vspace{4pt}

Since the conjugation of a hyperelliptic involution always yields a hyperelliptic involution, there are many more distinct braids conjugate to this one which also correspond to hyperelliptic involutions inside $\Theta_n^1$. 

\begin{theorem}
\label{Theorem 5.1}
    Let $S$ be a closed surface of genus $n$. we have that $(abc)^2\in\mathrm{Mod}(S)$ is a hyperelliptic involution.
\end{theorem}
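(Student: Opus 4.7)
The statement requires verifying two things: $((abc)^2)^2 = 1$ and $\Psi((abc)^2) = -\mathrm{Id}_{2n}$, where $\Psi$ is the symplectic representation of $\mathrm{Mod}(S)$. The first condition is essentially immediate from the presentations already established. For $n\geq 3$, the relator $(abc)^4 = 1$ is one of the defining relators of $\Theta_n^1 \cong B_4/Z(B_4)$ in (4.1). For $n\in\{1,2\}$ we have $a=c$, so $(abc)^2 = (aba)^2$; applying the braid relation $aba=bab$ gives $(aba)^2 = (aba)(bab) = (ab)^3$, whence $(aba)^4 = (ab)^6 = 1$ by presentation (4.2).

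For the symplectic condition I would handle the two regimes separately. When $n\in\{1,2\}$, the block form $\Psi(a) = \mathrm{diag}(A^{-1},A)$ and $\Psi(b) = \mathrm{diag}(B^{-1},B)$ recorded in the proof of Theorem \ref{Theorem 3.1} reduces the matter to a single $2\times 2$ computation: one checks that $ABA$ equals $\begin{bmatrix} 0 & 1 \\ -1 & 0 \end{bmatrix}$, whose square is $-\mathrm{Id}_2$, and so $\Psi((aba)^2) = -\mathrm{Id}_{2n}$. For $n\geq 3$ my plan is to choose a symplectic basis $\{x_j,y_j\}_{j=1}^n$ for $H_1(S)$ adapted to the handle decomposition implicit in Figure \ref{Figure P}, and compute $\Psi(a),\Psi(b),\Psi(c)$ from the transvection formula $T_\gamma\cdot v = v + \langle v,[\gamma]\rangle[\gamma]$ applied termwise in the factorisation $f_i = T_{\alpha_1^i}^{-1}\circ T_{\alpha_2^i}\circ\cdots\circ T_{\alpha_n^i}^{(-1)^n}$. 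Because the curves $\alpha_j^i$ with fixed $j$ are localised within a single handle, the resulting matrices should be block-banded, and the global identity $\Psi((abc)^2) = -\mathrm{Id}_{2n}$ should reduce to the same $2\times 2$ fact $(ABA)^2 = -\mathrm{Id}_2$ applied once per handle.

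The principal obstacle is the bookkeeping for $n\geq 3$: tracking the alternating $(-1)^j$ signs in the definition of $f_i$, and verifying that curves from distinct handles contribute no cross-terms in homology so that the action truly decomposes into $2\times 2$ blocks. A cleaner alternative would use the Birman--Hilden identification of Theorem \ref{Theorem 4.3} directly: under $\Theta_n^1 \cong B_4/Z(B_4)$ the element $(abc)^2$ corresponds to the half-twist braid $\Delta_4 \in B_4$, and $\Delta_4$ is well known to lift under the Birman--Hilden cover $S_1^2\to D_4$ to the hyperelliptic involution of $S_1^2$. Tracing this element through $\omega$ and the gluing map $\mathfrak{G}$ of Section \ref{Section 4.1} should produce a representative homeomorphism of $(abc)^2$ that acts as the local hyperelliptic involution on each cut piece $M_j$ and glues coherently across the separating curves $\boldsymbol{\varepsilon}_j$ to the hyperelliptic involution of $S$, bypassing the matrix computation entirely.
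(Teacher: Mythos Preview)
Your argument for $((abc)^2)^2=1$ matches the paper's. For the symplectic condition you diverge: the paper does not split into cases or compute matrices. It works instead with the conjugate $\tau=(cab)^2$, rewrites it as $cbabcb$ via braid and disjointness relations, and traces the standard homology generators $\alpha_j,\beta_j$ through this word by drawing pictures. For odd $j$ one sees $\beta_j\mapsto-\beta_j$ directly and $\alpha_j\mapsto\gamma_j$ where $\alpha_j+\gamma_j$ separates $S$, hence $\tau_*(\alpha_j)=-\alpha_j$; even $j$ is handled by the trick that $\tau=\tau^{-1}$ (since $\tau^2=1$), so every twist in the word $cbabcb$ can be replaced by its inverse without changing the mapping class, reducing to the odd-$j$ picture with all twist directions reversed.

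Your block-diagonal plan for $n\ge 3$ is valid---the matrices are genuinely block-diagonal, not merely block-banded, since each $\alpha_j^i$ lies in a single genus-one piece and hence has homology class in $\mathrm{span}(x_j,y_j)$---but the reduction to $(ABA)^2=-\mathrm{Id}_2$ per block needs the observation that $[\alpha_j^1]=\pm[\alpha_j^3]$ in $H_1(S)$ (they cobound a pair of pants with one of the separating curves $\boldsymbol{\varepsilon}_j$), so that in fact $\Psi(c)=\Psi(a)$; you flag this as bookkeeping, but it is the one substantive step. The paper's route buys more: the curve-tracing determines the action of $(cab)^2$ on isotopy classes, not just on homology, and this feeds directly into the subsequent Corollary~\ref{Corollary 5.1.1}, which identifies $(cab)^2$ with the rotation $[\lambda]$ via the Alexander method. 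Your Birman--Hilden alternative is plausible but not quite the shortcut it appears: under $\omega$ the half-twist $\Delta_4$ maps to an $n$-tuple with alternating inverses, and verifying that these glue under $\mathfrak{G}$ to the global hyperelliptic involution amounts to much the same geometric check.
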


\begin{proof}
We will show that $\tau\coloneq(cab)^2$ is a hyperelliptic involution. This will suffice, because
\[
(abc)^2=c^{-1}(cab)^2c,
\] 
and conjugating a hyperelliptic involution always yields a hyperelliptic involution. By definition, we must show two things:
\begin{enumerate}
\item $\tau^2=1$ and
\item $\Psi(\tau)=-\mathrm{Id}_{2n}\in\mathrm{Sp}_{2n}(\mathbb{Z})$,
\end{enumerate}
where $\Psi$ denotes the symplectic representation of $\mathrm{Mod}(S)$. First, notice that $(abc)^4=1$ for surfaces of genus 3 and higher, according to the presentation (4.1). Furthermore, since $a=c$ whenever the genus is 1 or 2, we may write, in such instances:
\[
(abc)^4=(aba)^4=aba(aba)aba(aba)=aba(bab)aba(bab)=(ab)^6=1,
\]
{\spaceskip=2pt where the last equality holds via the presentation (4.2). Therefore, the~relation~$(abc)^4~=~1$} is true for any genus, so we may now write,
\[
\tau^2=(cab)^4=c(abc)^4c^{-1}=cc^{-1}=1,
\]
and thus the first requirement is proven. To prove the second, recall the standard generators of $H_1(S)$ illustrated below:

\begin{center}    \includegraphics[width=0.96\textwidth]{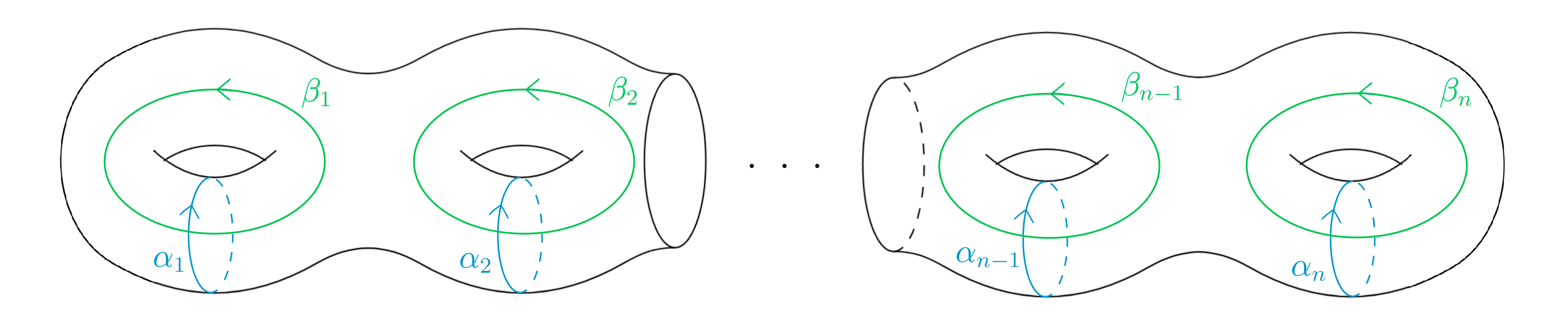}  
\end{center}

Notice that $\tau=cabcab=cabacb=cbabcb$. Suppose $j\in\{1,...,n\}$ is odd. We shall assess how the mapping class $cbabcb$ affects the homological generators $\alpha_j$ and $\beta_j$ individually on the surface $S$:
\begin{center}  
  \captionof{figure}{}
  \includegraphics[width=0.79\textwidth]{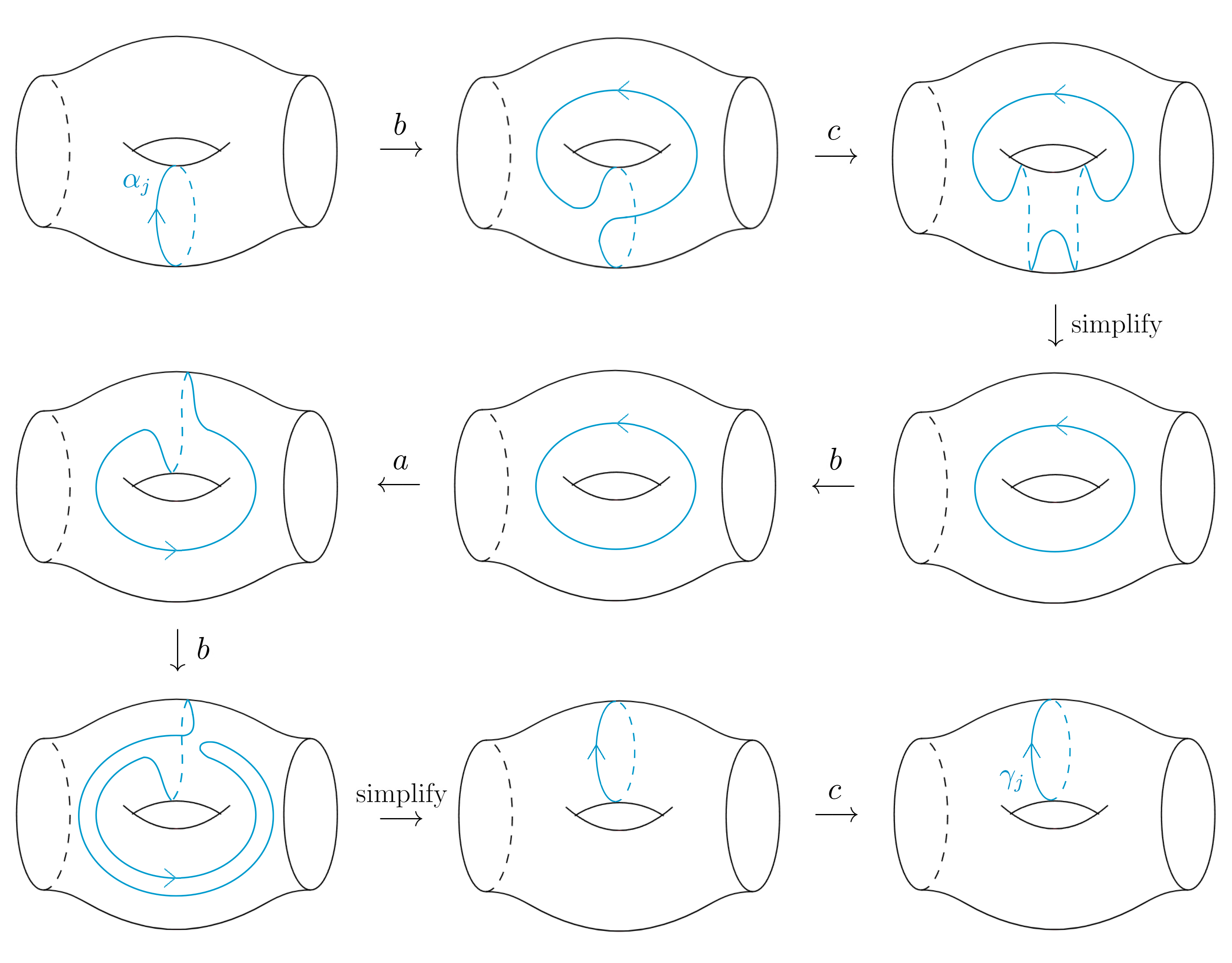}  
  \label{Figure alpha}
\end{center}

\begin{center}  
  \captionof{figure}{}
  \includegraphics[width=0.79\textwidth]{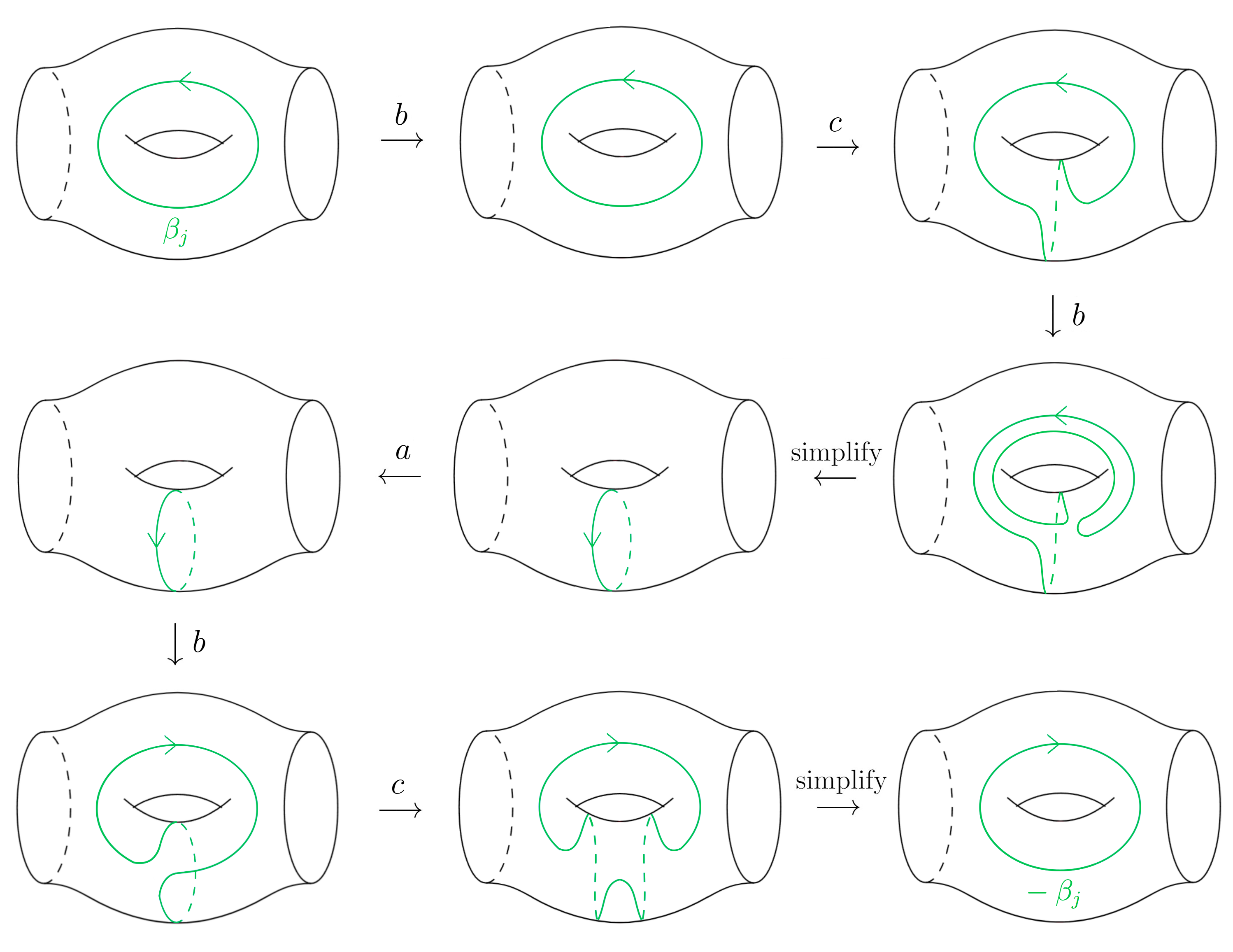}  
  \label{Figure beta}
\end{center}

Notice firstly that $\alpha_j+\gamma_j$ separates the surface $S$, meaning that $\alpha_j+\gamma_j=0\in H_1(S)$, or in other words,
\begin{align}
-\alpha_j=\gamma_j=\tau_*(\alpha_j),
\end{align}
where $\tau_*$ denotes the automorphism of $H_1(S)$ induced by $\tau$, i.e. $\tau_*=\Psi(\tau)$. Notice secondly that $\beta_j$ was sent to its inverse, or in other words,
\begin{align}
    -\beta_j=\tau_*(\beta_j).
\end{align}
We've shown that equations (4.3) and (4.4) hold for all odd $j$ inside $\{1,...,n\}$. Now, by braid and disjointness relations, we have that
\[
cbabcb=cbacbc=cbcabc=bcbabc,
\]
but then, since $cbabcb=\tau$ has order 2, we know $cbabcb$ is equal to its own inverse, and thus
\[
cbabcb=(bcbabc)^{-1}=c^{-1}b^{-1}a^{-1}b^{-1}c^{-1}b^{-1}.
\]
This shows that every right Dehn twist in $(cab)^2$ can be replaced with a left Dehn twist and vice versa, and nothing will change. Therefore, if $j\in\{1,...,n\}$ is even, $(cab)^2$ will affect the curve $\alpha_j$ and $\beta_j$ in the exact same manner as what is depicted in \blue{Figure \ref{Figure alpha}} and \blue{Figure \ref{Figure beta}}. We conclude that equations (4.3) and (4.4) hold for all $j\in\{1,...,n\}$ and 
hence, $\Psi(\tau)=\tau_*=-\mathrm{Id}_{2n}\in\mathrm{Sp}_{2n}(\mathbb{Z})$, as required.\end{proof}

The proof above can be utilized to prove a slightly more precise corollary which will come in handy in the next section. For the following result, we let $\lambda$ denote the homeomorphism of $S$ given by the following rotation by 180°:
\begin{center}  
  \captionof{figure}{}
  \includegraphics[width=0.63\textwidth]{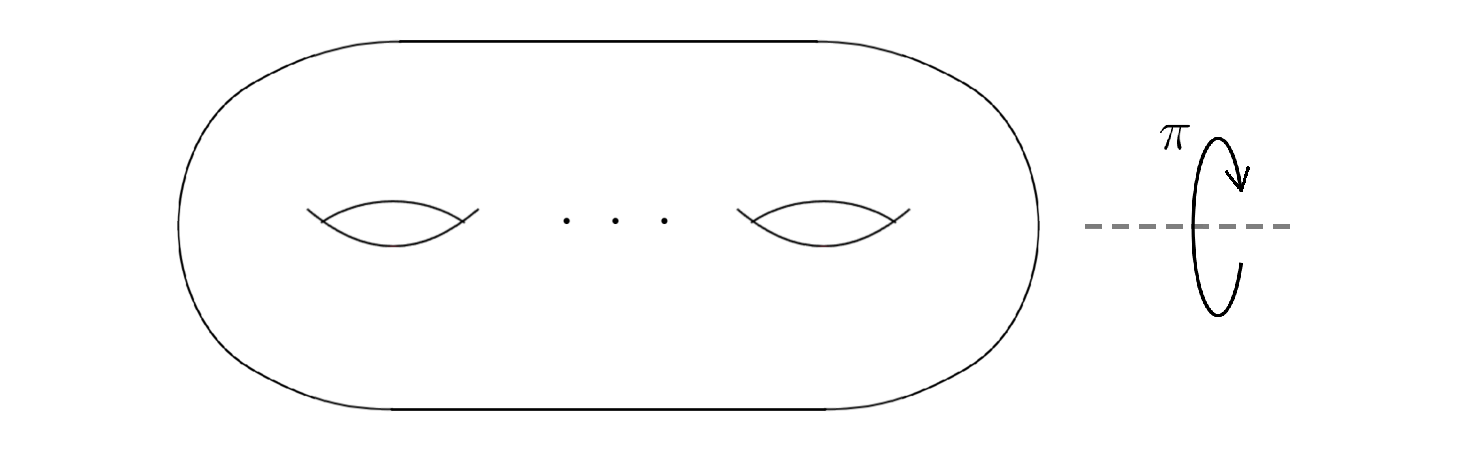}  
  \label{Figure lambda}
\end{center}
\begin{corollary}[Corollary of the previous proof]
\label{Corollary 5.1.1}
Let $S$ be a closed surface of genus $n$ and let $[\lambda]\in\mathrm{Mod}(S)$ denote the mapping class of $\lambda$. We have that $[\lambda]=(cab)^2\in\Theta_n^1$.
\end{corollary}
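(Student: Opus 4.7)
The plan is to extract the corollary directly from the geometric computations already performed in the proof of Theorem \ref{Theorem 5.1}, upgraded from the level of homology to the level of isotopy classes of curves, and then to invoke the Alexander method to conclude an equality of mapping classes.

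For genus $1$ and $2$, the hyperelliptic involution on $S$ is unique, so the equality $[\lambda] = (cab)^2$ is immediate from Theorem \ref{Theorem 5.1}: both mapping classes are hyperelliptic involutions, hence the same element of $\mathrm{Mod}(S)$. The substantive case is $n \geq 3$. Here my plan is to revisit Figures \ref{Figure alpha} and \ref{Figure beta} from the previous proof with slightly sharper eyes. These figures were used in Theorem \ref{Theorem 5.1} only to extract a homology computation, but they actually depict the images of the curves $\alpha_j$ and $\beta_j$ as isotopy classes under $\tau = (cab)^2 = cbabcb$. Direct inspection shows that, for odd $j$, $\tau$ carries $\alpha_j$ and $\beta_j$ to curves isotopic to $\alpha_j$ and $\beta_j$ respectively, with orientations reversed. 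The symmetry argument from the end of Theorem \ref{Theorem 5.1}'s proof (replacing every right Dehn twist in $(cab)^2$ with a left one and vice versa without altering the mapping class) then gives the identical conclusion for even $j$.

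Next I would compare this to the action of $\lambda$. Because $\lambda$ is a rotation by $180^\circ$ about an axis passing through the cores of all $n$ handles, each $\alpha_j$ (meridian) and each $\beta_j$ (longitude) is preserved as an unoriented simple closed curve, with orientation reversed. Thus $\lambda$ and $\tau$ act identically on every curve in the collection $\{\alpha_j, \beta_j\}_{j=1}^{n}$. Since this collection (together, if needed, with one additional curve per handle to form a genuine Alexander system) fills the surface $S$, the Alexander method (Farb--Margalit \cite[Proposition 2.8]{Primer}) implies that any two mapping classes agreeing on every member of the collection must coincide. Therefore $[\lambda] = (cab)^2$ in $\mathrm{Mod}(S)$, and membership in $\Theta_n^1$ is automatic from the definition of $(cab)^2$.

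The main obstacle is purely bookkeeping: one must verify that the images recorded in Figures \ref{Figure alpha} and \ref{Figure beta} really do agree with $\alpha_j$ and $\beta_j$ as isotopy classes (not merely as homology classes), and that the chosen curve system satisfies the hypotheses of the Alexander method. Neither requires new ideas beyond what appears in Theorem \ref{Theorem 5.1}'s proof, but the argument is only as clean as the figures it relies on.
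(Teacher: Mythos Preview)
Your approach is the paper's approach: apply the Alexander method to a filling curve system and check that $[\lambda]\circ(cab)^2$ fixes every curve. Two points are worth flagging.

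First, the collection $\{\alpha_j,\beta_j\}_{j=1}^{n}$ does \emph{not} fill $S$ for $n\geq 2$ (the complement contains an $n$-holed sphere), so the parenthetical ``if needed, with one additional curve per handle'' is not optional. The paper handles this by adding curves $\delta_j$ ($j=1,\dots,n-1$) that link adjacent handles and then computing $\tau(\delta_j)$ via a new figure, extending to even $j$ by the same left/right-twist symmetry argument. That computation is the actual new content of the corollary's proof; you have correctly identified the architecture but deferred the one step that requires work.

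Second, your reading of Figure \ref{Figure alpha} is slightly more optimistic than what the figure supports. The paper names the image curve $\gamma_j=\tau(\alpha_j)$ and argues that $\alpha_j+\gamma_j$ \emph{separates} $S$; this would be a strange thing to say if $\gamma_j$ were visibly isotopic to $\alpha_j$. Rather than asserting that $\tau$ and $\lambda$ each preserve the unoriented isotopy class of $\alpha_j$, the paper argues directly that the \emph{composite} $[\lambda]\circ(cab)^2$ preserves each $\alpha_j,\beta_j,\delta_j$ together with its orientation, and then invokes the Alexander method. This is the safer framing, and it is what the figures actually justify.
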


\begin{proof}
    Consider the following simple closed curves in $S$, which satisfy the assumptions required for the Alexander method \cite[Section 2.3]{Primer}:\samepage{
\begin{center}  
  \captionof{figure}{} \includegraphics[width=0.97\textwidth]{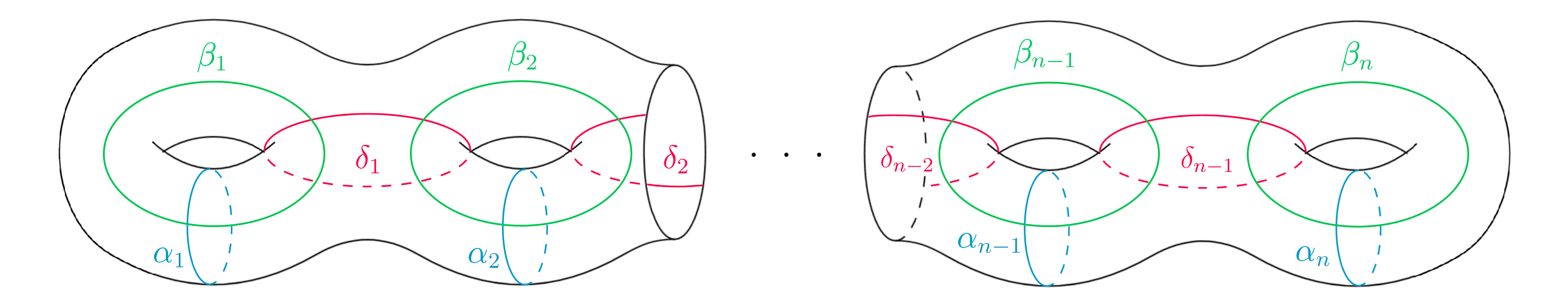}  
  \label{Figure Alexander}
\end{center}
}

\newpage

\samepage{
{\spaceskip=5pt Recall that we have $(cab)^2=cbabcb$ by braid and disjointness relations. Assume} $j\in\{1,...,n-1\}$ is odd. We shall determine how the curve $\delta_j$ is affected by the mapping class $cbabcb$ up to isotopy:

\begin{center}  
  \includegraphics[width=0.86\textwidth]{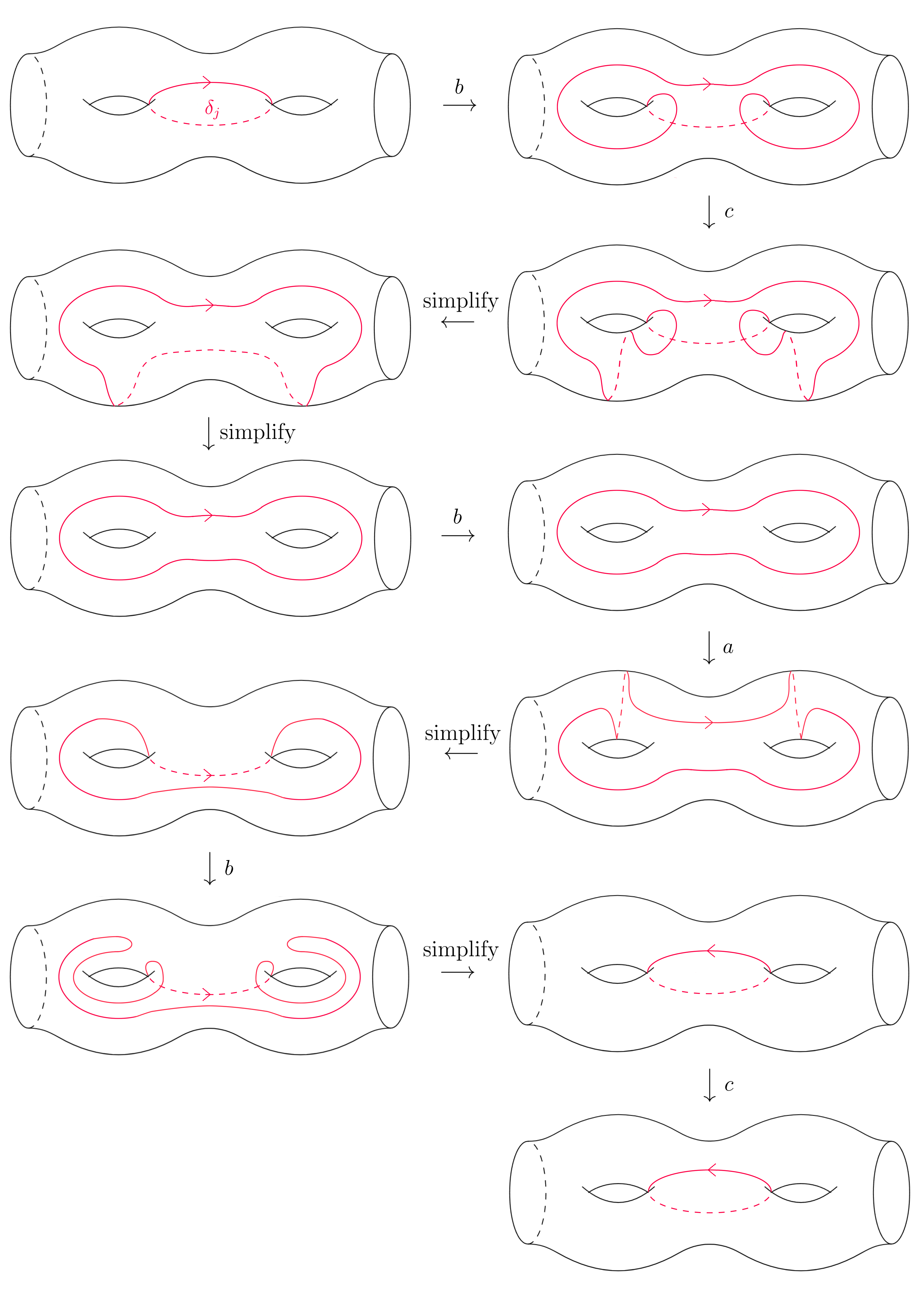}  

\end{center}

\vspace{-2pt}

By a similar argument to the one in the previous proof, the result in this figure must also hold for even $j\in\{1,...,n-1\}$, so that it holds for all $j\in\{1,...,n-1\}$. Now notice that the mapping class $[\lambda]\circ(cab)^2$ preserves $\delta_j$ along with its orientation. The same can {\spaceskip=3pt be said about $\alpha_j$ and $\beta_j$ for any $j\in\{1,...,n\}$ (recall the results in \blue{Figure \ref{Figure alpha}} and \blue{Figure \ref{Figure beta}})}. We conclude by the Alexander method that $[\lambda]\circ(cab)^2=1$, and thus
\[
[\lambda]=(cab)^2.
\]}\end{proof}

\section{Roots of hyperelliptic involutions}
\label{Section 6}
In this section, we go over the roots admitted by the expression $(abc)^2\in\mathrm{Mod}(S)$. For genus 1 and 2, these can be seen as matrices, and for genus 3 or higher, these can be seen as braids, as discussed in the previous section.

\vspace{6pt}

\subsection{Genus 1 and 2} \label{Section 5.1} Suppose $n\in\{1,2\}$. It was shown in \blue{Section \ref{Section 3}} that $\Theta_n^1$ has a faithful symplectic representation, yielding an isomorphism to $\mathrm{SL}_2(\mathbb{Z})$. As a result, the hyperelliptic involution $(aba)^2\in\Theta_n^1$ can be identified with the matrix $-\mathrm{Id}_2$, which admits many square and cubic roots inside $\mathrm{SL}_2(\mathbb{Z})$. We determine what those are in the following proposition, and record the consequences for $\Theta_n^1$ in \blue{Theorem \ref{theorem X}}.

\begin{proposition}
\label{Proposition 6.1}
The matrix $-\mathrm{Id}_2\in\mathrm{SL}_2(\mathbb{Z})$ has infinitely many square and cubic roots inside $\mathrm{SL}_2(\mathbb{Z})$. The former roots are precisely the conjugacy classes of
\begin{align*}
\begin{bmatrix}
0 & 1\\
-1 & 0
\end{bmatrix} \,\,\text{  and  }\,\,\, \begin{bmatrix}
0 & -1\\
1 & 0
\end{bmatrix},
\end{align*}
and the latter roots are precisely the conjugacy classes of
\begin{align*}
\begin{bmatrix}
0 & 1\\
-1 & 1
\end{bmatrix} \,\,\text{  and  }\,\,\, \begin{bmatrix}
1 & -1\\
1 & 0
\end{bmatrix},
\end{align*}
as well as $-\mathrm{Id}_2$ itself.
\end{proposition}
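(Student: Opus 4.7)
The plan is to reduce the problem to a simple trace condition, exhibit infinitely many roots by a one-parameter family of conjugates, and classify the conjugacy classes by viewing $\mathbb{Z}^2$ as a module over an order in an imaginary quadratic field.

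First, I would invoke Cayley--Hamilton. Every $M \in \mathrm{SL}_2(\mathbb{Z})$ satisfies $M^2 = \mathrm{tr}(M)\, M - \mathrm{Id}_2$, so $M^2 = -\mathrm{Id}_2$ iff $\mathrm{tr}(M) = 0$. Multiplying by $M$ once more and running a short case analysis gives $M^3 = -\mathrm{Id}_2$ iff either $M = -\mathrm{Id}_2$ or $\mathrm{tr}(M) = 1$ (in which case $M$ has order exactly $6$, with characteristic polynomial $x^2 - x + 1$). Each of the four displayed matrices has the requisite trace, so they are honest roots. Infinitely many roots in each case are then produced by conjugating any one representative by the shears $\begin{pmatrix} 1 & n \\ 0 & 1 \end{pmatrix}$ for $n \in \mathbb{Z}$; the entries of the conjugates grow with $n$, so no two coincide.

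For the square-root classification, any such $M$ endows $\mathbb{Z}^2$ with a $\mathbb{Z}[i]$-module structure, where $i$ acts as $M$. Since $\mathbb{Z}[i]$ is a principal ideal domain and $\mathbb{Z}^2$ is torsion-free of $\mathbb{Z}$-rank $2$, it is free of $\mathbb{Z}[i]$-rank $1$. Choosing a $\mathbb{Z}[i]$-generator $v$, the ordered $\mathbb{Z}$-basis $(v, Mv)$ presents $M$ as $\begin{pmatrix} 0 & -1 \\ 1 & 0 \end{pmatrix}$, while the reversed ordering $(Mv, v)$ presents $M$ as $\begin{pmatrix} 0 & 1 \\ -1 & 0 \end{pmatrix}$; exactly one of the two change-of-basis matrices from the standard basis has determinant $+1$, so $M$ is $\mathrm{SL}_2(\mathbb{Z})$-conjugate to one of the two listed matrices. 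That these two matrices are not themselves conjugate in $\mathrm{SL}_2(\mathbb{Z})$ I would then verify directly: writing $P S = -S P$ with $P = \begin{pmatrix} a & b \\ c & d \end{pmatrix}$ and $\det P = 1$ quickly forces $b = c$, $d = -a$, and $a^2 + b^2 = -1$, which has no integer solution.

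The cubic-root analysis is entirely parallel. For $M \neq -\mathrm{Id}_2$ with $M^3 = -\mathrm{Id}_2$, the ring $\mathbb{Z}[M]$ is isomorphic to the Eisenstein integers $\mathbb{Z}[\zeta_6]$, which is again a principal ideal domain. The same module-theoretic argument collapses all such $M$ into two $\mathrm{SL}_2(\mathbb{Z})$-conjugacy classes, represented by the two listed order-$6$ matrices (which are inverses of one another); together with the singleton class $\{-\mathrm{Id}_2\}$ this accounts for the three claimed classes. The main obstacle I anticipate is the last step in each case, namely ensuring that the single $\mathrm{GL}_2(\mathbb{Z})$-conjugacy class produced by the module theory splits into exactly two $\mathrm{SL}_2(\mathbb{Z})$-classes and no fewer; this hinges on the explicit determinant/impossibility computation sketched above.
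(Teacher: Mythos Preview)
Your proposal is correct and takes a genuinely different route from the paper. The paper argues geometrically: after deducing the trace condition from the eigenvalues, it uses the M\"obius action of $\mathrm{SL}_2(\mathbb{Z})$ on the upper half plane, observes that any root is elliptic, conjugates so that the unique fixed point lies in the closure of the standard fundamental domain, and then extracts explicit inequalities on the matrix entries from the conditions $|\mathrm{Re}(w_0)|\leq\tfrac12$ and $|w_0|\geq 1$ to pin the representative down to one of the listed matrices. Your argument is instead number-theoretic: you obtain the trace condition via Cayley--Hamilton and then classify conjugacy classes by recognising $\mathbb{Z}^2$ as a rank-one module over the Gaussian or Eisenstein integers, exploiting that both rings have class number one. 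Your approach is more structural and explains conceptually why there is a single $\mathrm{GL}_2(\mathbb{Z})$-class; the paper's approach is more elementary in that it avoids module theory, at the cost of some inequality-chasing. One small point: the paper does not explicitly verify that the two listed matrices in each case are \emph{non}-conjugate in $\mathrm{SL}_2(\mathbb{Z})$ (the proposition can be read as asserting only that the root set equals the union of the two classes), whereas you do address this; your sketched determinant obstruction for the square-root case goes through verbatim for the cubic case, where the analogous equation $a^2+ab+b^2=-1$ has no integer solutions.
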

\begin{proof}
    We begin with square roots. Let $M\in\mathrm{SL}_2(\mathbb{Z})$ and write
\begin{align*}
M=\begin{bmatrix}
t_1 & t_2\\
t_3 & t_4
\end{bmatrix}
\end{align*}
for $t_1,t_2,t_3,t_4\in\mathbb{Z}$. Assume $M^2=-\mathrm{Id}_2$. Then, the eigenvalues of $M^2$ are $-1$ and $-1$, so that the eigenvalues of $M$ must square to $-1$. Also, the product of said eigenvalues must equal $\mathrm{det}(M)=1$, hence these must be $i$ and $-i$. We conclude that the characteristic polynomial of $M$ is $x^2+1$, meaning
\begin{align*}
x^2+1=\mathrm{det}(\mathrm{Id}_2\cdot x-M)=x^2-\mathrm{tr}(M)x+\mathrm{det}(M),
\end{align*}
which implies $\mathrm{tr}(M)=0$. Next, consider the action of $\mathrm{SL}_2(\mathbb{Z})$ on the upper half plane $\mathbb{H}$ given by Möbius transformations, that is
\[
Mz=\frac{t_1z+t_2}{t_3z+t_4} \,\,\,\text{   for all }z\in\mathbb{H}.
\]
Since $|\mathrm{tr}(M)|<2$, $M$ must be elliptic \cite[Section 2.2]{Topics}. This means $M$ acts as a rotation of $\mathbb{H}$ with exactly one fixed point $w\in\mathbb{H}$. We know by \cite[Theorem 1.2]{Topics} that the fundamental domain of $\mathbb{H}$ with respect to the action of $\mathrm{SL}_2(\mathbb{Z})$ is
\[
D=\{z\in \mathbb{H}\,:\,|\mathrm{Re}(z)|<\frac{1}{2},\,|z|>1\,\}.
\]
Then, by definition there exists a matrix $C\in\mathrm{SL}_2(\mathbb{Z})$ such that $Cw$ lies inside the closure of $D$. As such, the matrix $CMC^{-1}$ has fixed point $w_0\coloneq Cw$. Write
\begin{align*}
M_0\coloneq CMC^{-1}=\begin{bmatrix}
p & q\\
r & s
\end{bmatrix},
\end{align*}
for $p, q, r, s\in\mathbb{Z}$. Since $w_0$ is fixed by $M_0$, we have
\[
w_0=\frac{pw_0+q}{rw_0+s}.
\]
if $r=0$ then this forces $M_0=\pm \mathrm{Id}_2$, which contradicts the fact that $M_0$ is conjugate to an order 4 element, so $r\neq0$. We may therefore use the quadratic formula to isolate $w_0$, which specifically yields
\[
w_0=\frac{p-s}{2r} + \frac{\sqrt{-(s-p)^2-4qr}}{2r}i,
\]
since $w_0$ has positive imaginary part. As a result we obtain the following:
\begin{align}
|\mathrm{Re}(w_0)|=\frac{|p-s|}{|2r|} \,\,\,\,\text{   and   }\,\,\,\,|w_0|=\sqrt{\frac{-q}{r}}.
\end{align}
Now, since $M_0^2=-\mathrm{Id}_2$, we know from earlier that $\mathrm{tr}(M_0)=p+s=0$, which implies $p-s=2p$. Also, from the fact that $w_0$ lies in the closure of $D$, we can write
\[
\frac{|p|}{|r|}\leq \frac{1}{2}\,\,\,\,\text{ 
  and   }\,\,\,\,\sqrt{\frac{-q}{r}}\geq 1,
\]
so in particular,
\begin{align}
2|s|=2|p|\leq |r| \leq |q|.
\end{align}
Suppose for the sake of contradiction that $|r|>1$. By the reverse triangle inequality, we know
\[
||ps|-|qr||\leq|ps-qr|=|\mathrm{det}(M_0)|=1,
\]
which implies $-1\leq |ps|-|qr|$. Furthermore, equation (5.2) implies $r^2\leq|qr|$, meaning in particular that
\[
0<r^2-1\leq|qr|-1\leq|ps|=p^2.
\]
But then, 
\[
 0<r^2-1\leq p^2 \quad\implies\quad |r|\leq \sqrt{p^2+1} < 2|p|,
\]
which contradicts equation (5.2), so we conclude $|r|=1$. As a result, equation (5.2) forces $p=s=0$, and then $\mathrm{det}(M_0)=1$ forces $q=-r$. Hence, $M_0$ is one of two matrices:
\begin{align*}
\begin{bmatrix}
0 & 1\\
-1 & 0
\end{bmatrix} \,\,\text{  or  }\,\,\, \begin{bmatrix}
0 & -1\\
1 & 0
\end{bmatrix}.
\end{align*}
As a result, since $M$ was arbitrary, any matrix inside $\mathrm{SL}_2(\mathbb{Z})$ which squares to $\mathrm{-Id}_2$ must be conjugate to one of these two matrices. Conversely, if any matrix is conjugate to one of these two matrices, then it must square to $-\mathrm{Id}_2$, because
\[
(CM_0C^{-1})^2=CM_0^2C^{-1}=-C\mathrm{Id}_2C^{-1}=-\mathrm{Id}_2
\]
for all $C\in\mathrm{SL}_2(\mathbb{Z})$. Finally, notice that these two matrices are inverses of each other, and that their conjugacy classes are infinite, as for instance
\begin{align*}
\begin{bmatrix}
1 & n\\
0 & 1
\end{bmatrix}{\begin{bmatrix}
0 & 1\\
-1 & 0
\end{bmatrix}}^{\pm 1}{\begin{bmatrix}
1 & n\\
0 & 1
\end{bmatrix}}^{-1} = {\begin{bmatrix}
-n & n^2+1\\
-1 & n
\end{bmatrix}}^{\pm 1}
\end{align*}
for all $n\in\mathbb{N}$. This concludes the proof of the original claim for square roots of $-\mathrm{Id}_2$ inside $\mathrm{SL}_2(\mathbb{Z})$.
\\
\\
We now tackle the cubic roots of $\mathrm{-Id}_2$ inside $\mathrm{SL}_2(\mathbb{Z})$. Assume $M\in\mathrm{SL}_2(\mathbb{Z})$ satisfies $M^3=-\mathrm{Id}_2$. Then, the eigenvalues of $M^3$ are $-1$ and $-1$, so that the eigenvalues of $M$ must cube to $-1$. Additionally, the product of said eigenvalues must equal $\mathrm{det}(M)=1$, so we conclude the eigenvalues must be either $-1$ and $-1$ or $e^{i\pi/3}$ and $e^{-i\pi/3}$. Assume the former. Then, the characteristic polynomial of $M$ is $x^2+2x+1$, meaning
\begin{align*}
x^2+2x+1=\mathrm{det}(\mathrm{Id}_2\cdot x-M)=x^2-\mathrm{tr}(M)x+\mathrm{det}(M).
\end{align*}
 Suppose for the sake of contradiction that $M\neq -\mathrm{Id}_2$. The above equation implies $\mathrm{tr}(M)=-2$, so $M$ must be parabolic \cite[Section 2.2]{Topics}, and hence it has infinite order, a contradiction. So $M$ must be $-\mathrm{Id}_2$. 

Now assume $M$ has eigenvalues $e^{i\pi/3}$ and $e^{-i\pi/3}$ instead. Then, the characteristic polynomial of $M$ is $x^2-x+1$, meaning $\mathrm{tr}(M)=1$, so that $M$ is elliptic. By a similar argument to the one for square roots, $M$ must be conjugate to some $M_0\in\mathrm{SL}_2(\mathbb{Z})$ with fixed point $w_0$ lying inside the closure of $D$. Furthermore, letting $p,q,r,s\in\mathbb{Z}$ denote the entries in $M_0$ as before, we must have $\mathrm{tr}(M_0)=1=p+s$. The equalities in (5.1) still hold true, however this time $|p-s|=|2p-1|$. Then, since $w_0$ lies in the closure of $D$, we may write
\[
\frac{|2p-1|}{|2r|}\leq \frac{1}{2}\,\,\,\,\text{ 
  and   }\,\,\,\,\sqrt{\frac{-q}{r}}\geq 1.
\]
Notice that, by the reverse triangle inequality, we have $2|p|-1\leq|2p-1|$, and hence we can write
\begin{align}
2|p|-1\leq|r|\leq|q|.
\end{align}
Suppose for the sake of contradiction that $|p|>1$. In a similar fashion to the previous argument, we can write 
\[
r^2-1\leq|qr|-1\leq|ps|,
\]
however this time we have $|ps|=|p^2-p|\leq p^2+|p|$. As a result, we get
\[
r^2-1\leq p^2+|p|,
\]
which implies
\[
|r|\leq \sqrt{p^2+|p|+1}<2|p|-1,
\]
thereby contradicting equation (5.3). We conclude $|p|\leq 1$, and since $p+s=1$, this forces one of $p,s$ to be 1 and the other to be 0. Then, $\mathrm{det}(M_0)=1$ forces one of $q,r$ to be 1 and the other to be -1. At this stage, there are four possibilities for $M_0$:
\begin{align*}
\begin{bmatrix}
0 & -1\\
1 & 1
\end{bmatrix}, \,\,\text{  or  }\,\,\, \begin{bmatrix}
0 & 1\\
-1 & 1
\end{bmatrix},
\,\,\text{  or  }\,\,\, \begin{bmatrix}
1 & 1\\
-1 & 0
\end{bmatrix},
\,\,\text{  or  }\,\,\, \begin{bmatrix}
1 & -1\\
1 & 0
\end{bmatrix}.
\end{align*}
However, we can notice that the first matrix is the transpose of the second, which means these two are conjugate, and the same can be said of the third and fourth matrices respectively. Therefore, $M$ belongs to one of two conjugacy classes. Either that of
\begin{align*}
\begin{bmatrix}
0 & 1\\
-1 & 1
\end{bmatrix} \,\,\text{  or that of its inverse }\,\,\, \begin{bmatrix}
1 & -1\\
1 & 0
\end{bmatrix}.
\end{align*}
Then, by similar arguments to the ones used for square roots, we have that the cubic roots of $-\mathrm{Id}_2$ inside $\mathrm{SL}_2(\mathbb{Z})$ are precisely the infinite conjugacy classes of these two matrices, as well as $-\mathrm{Id}_2$ itself.
\end{proof}

\begin{theorem}
\label{theorem X}
    Let $S$ be a closed surface of genus  $n\in\{1,2\}$. Then, the hyperelliptic involution inside $\mathrm{Mod}(S)$ has infinitely many square and cubic roots inside $\Theta_n^1$. The former roots are precisely the conjugacy classes of $(aba)$ and $(aba)^{-1}$ within $\Theta_n^1$, and the latter roots are precisely the conjugacy classes of $(ab)$ and $(ab)^{-1}$ within $\Theta_n^1$, as well as the hyperelliptic involution itself.
\end{theorem}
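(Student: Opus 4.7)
The plan is to transport the question across the isomorphism $\Psi|_{\Theta_n^1}\colon \Theta_n^1 \xrightarrow{\sim} \mathrm{SL}_2(\mathbb{Z})$ (given by Theorem~\ref{Theorem 3.1} for $n=2$, and by the classical identification $\mathrm{Mod}(S_1)\cong\mathrm{SL}_2(\mathbb{Z})$ together with the construction in Section~\ref{Section 3} for $n=1$), and then to invoke Proposition~\ref{Proposition 6.1}. Since an isomorphism preserves $k$-th roots and respects conjugacy classes, the task reduces to matching the hyperelliptic involution and the four proposed representative elements with their images in $\mathrm{SL}_2(\mathbb{Z})$.

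First I would recall that, since $a=c$ for genus $\leq 2$, the hyperelliptic involution of Theorem~\ref{Theorem 5.1} is $(aba)^2 \in \Theta_n^1$, and by definition its symplectic image is $-\mathrm{Id}_{2n}$; restricted to the nontrivial $\mathrm{SL}_2(\mathbb{Z})$-block appearing in Theorem~\ref{Theorem 3.1} this is $-\mathrm{Id}_2$. Using the block expressions $\Psi(a) = \mathrm{diag}(A^{-1}, A)$ and $\Psi(b) = \mathrm{diag}(B^{-1}, B)$ from the proof of Theorem~\ref{Theorem 3.1}, a direct multiplication yields
\[
aba \;\longmapsto\; ABA \;=\; \begin{bmatrix} 0 & 1 \\ -1 & 0 \end{bmatrix}, \qquad ab \;\longmapsto\; AB \;=\; \begin{bmatrix} 0 & 1 \\ -1 & 1 \end{bmatrix},
\]
and $(aba)^{-1}, (ab)^{-1}$ are sent to the inverse matrices. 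These are precisely the four representatives listed in Proposition~\ref{Proposition 6.1}, and one may sanity-check that $(ABA)^2 = -\mathrm{Id}_2 = (AB)^3$, consistent with Theorem~\ref{Theorem 5.1}.

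With these identifications in place, the theorem becomes a direct pullback of Proposition~\ref{Proposition 6.1}: the square roots of $-\mathrm{Id}_2$ in $\mathrm{SL}_2(\mathbb{Z})$ are exactly the two infinite conjugacy classes of $ABA$ and $(ABA)^{-1}$, while the cubic roots are the infinite conjugacy classes of $AB$ and $(AB)^{-1}$ together with $-\mathrm{Id}_2$ itself. Applying $(\Psi|_{\Theta_n^1})^{-1}$ produces exactly the conjugacy classes of $aba,\ (aba)^{-1},\ ab,\ (ab)^{-1}$ inside $\Theta_n^1$, plus the hyperelliptic involution itself as a trivial cubic root, with each class remaining infinite under the isomorphism. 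I do not foresee any genuine obstacle: the substantive content is already packaged in Theorems~\ref{Theorem 3.1} and~\ref{Theorem 5.1} and in Proposition~\ref{Proposition 6.1}, so the remaining argument is pure bookkeeping via the explicit symplectic isomorphism.
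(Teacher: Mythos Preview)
Your proposal is correct and follows essentially the same approach as the paper: transport the problem to $\mathrm{SL}_2(\mathbb{Z})$ via the isomorphism of Theorem~\ref{Theorem 3.1}, identify $(aba)^2$ with $-\mathrm{Id}_2$ and compute that $aba$, $ab$ and their inverses map to the four matrices listed in Proposition~\ref{Proposition 6.1}, then invoke that proposition. The paper's proof is slightly more terse about the block-diagonal structure (it simply records the isomorphism as $a\mapsto A$, $b\mapsto B$), but the logical content is identical.
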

\begin{proof}
    Let $n\in\{1,2\}$. Then, we may recall the isomorphism $\Theta_n^1\cong\mathrm{SL}_2(\mathbb{Z})$ constructed in \blue{Section \ref{Section 3}}, given by $a\longmapsto A$ and $b\longmapsto B$, where
\begin{align*}
A=\begin{bmatrix}
1 & 1\\
0 & 1
\end{bmatrix} \,\,\text{  and  }\,\,\, B=\begin{bmatrix}
1 & 0\\
-1 & 1
\end{bmatrix}.
\end{align*}
With this in mind, the unique hyperelliptic involution $(aba)^2\in\Theta_n^1$ corresponds to the matrix $(ABA)^2=-\mathrm{Id}_2\in\mathrm{SL}_2(\mathbb{Z})$. Furthermore, it is easily verified that
\begin{alignat*}{2}
ABA&=\begin{bmatrix}
0 & 1\\
-1 & 0
\end{bmatrix}, \quad (ABA)^{-1}=&&\begin{bmatrix}
0 & -1\\
1 & 0
\end{bmatrix},\\
\text{and}\quad\quad AB&=\begin{bmatrix}
0 & 1\\
-1 & 1
\end{bmatrix}, \quad
(AB)^{-1}=&&\begin{bmatrix}
1 & -1\\
1 & 0
\end{bmatrix},
\end{alignat*} which, by \blue{Proposition \ref{Proposition 6.1}}, proves the required claim.
\end{proof}

    \noindent\textit{Remark 5.2.1.} Recall that a closed surface $S$ of genus $n$ can be thought of as either a $(4n)$-gon with opposite sides identified or a $(4n+2)$-gon with opposite sides identified.
    It is easy to check that a rotation by $\pi$ of either of these polygons yields a hyperelliptic involution on $S$ (it is an order 2 mapping class which sends the generators of $H_1(S)$ to their inverses). As such, the existence of at least one square root of said hyperelliptic involution is expected, since $4n$ is always divisible by 4 and thus a rotation by $\pi/2$ of the $(4n)$-gon is possible. However, for genus $2$, the existence of cubic roots of the hyperelliptic involution (other than itself) is interesting, because the polygons involved are the octagon and the decagon, and yet neither 8 nor 10 are divisible by 6.

\vspace{6pt}

\subsection{Genus 3 and higher} \label{Section 5.2} If the surface $S$ has genus $n\geq 3$, we can produce a result analogous to \blue{Theorem \ref{theorem X}} for square roots of hyperelliptic involutions. We consider the conjugacy classes of $(abc)\in\Theta_n^1$ and its inverse.

\begin{theorem}
    {\spaceskip=3pt Let $S$ be a closed surface of genus $n\geq 3$. Then, each hyperelliptic involution} in $\mathrm{Mod}(S)$ has infinitely many square roots that are conjugate to either $(abc)\in\Theta_n^1$ or $(abc)^{-1}\in\Theta_n^1$.
\end{theorem}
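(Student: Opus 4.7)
The plan is to first identify $(abc)$ and $(abc)^{-1}$ as square roots of the hyperelliptic involution $(abc)^2$, then reduce the general statement to producing infinitely many square roots of this particular involution by $\mathrm{Mod}(S)$-conjugation (using the fact that all hyperelliptic involutions on $S$ are $\mathrm{Mod}(S)$-conjugate for $n\geq 3$), and finally obtain infinitely many such roots by conjugating $(abc)$ by elements of the centralizer $\mathrm{SMod}(S):=C_{\mathrm{Mod}(S)}((abc)^2)$.

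First I would verify the two initial square roots: \blue{Theorem \ref{Theorem 5.1}} says $(abc)^2$ is a hyperelliptic involution, and the relation $(abc)^4=1$ from the presentation of $\Theta_n^1$ in \blue{Theorem \ref{Theorem 4.3}} gives $\bigl((abc)^{-1}\bigr)^2=(abc)^{-2}=(abc)^2$. For the reduction: given any hyperelliptic involution $\rho$, pick $f\in\mathrm{Mod}(S)$ conjugating $(abc)^2$ to $\rho$; then for every $h\in\mathrm{SMod}(S)$ the element $fh(abc)h^{-1}f^{-1}$ is a square root of $\rho$ and is $\mathrm{Mod}(S)$-conjugate to $(abc)$. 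So the task reduces to showing the $\mathrm{SMod}(S)$-conjugation orbit of $(abc)$ is infinite.

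For this main step I would use the closed-surface Birman--Hilden theorem for $\iota:=(abc)^2$, which yields the short exact sequence
\[
1\longrightarrow\langle\iota\rangle\longrightarrow\mathrm{SMod}(S)\longrightarrow\mathrm{Mod}(S^2,\mathcal{B})\longrightarrow 1,
\]
where $\mathcal{B}$ is the unordered set of $2n+2$ branch points. Since $(abc)$ commutes with $\iota=(abc)^2$ it lies in $\mathrm{SMod}(S)$, and its image $\tilde\psi$ in the quotient is an involution on $S^2$ which --- by Nielsen realization applied to the $\mathbb{Z}/4$-subgroup $\langle(abc)\rangle$ and a Riemann--Hurwitz count --- fixes two of the marked points and swaps the remaining $2n$ in pairs. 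Distinct conjugates of $\tilde\psi$ in the quotient lift to distinct conjugates of $(abc)$ in $\mathrm{SMod}(S)$ (equality upstairs would project to equality downstairs), so it suffices to prove the $\mathrm{Mod}(S^2,\mathcal{B})$-conjugacy class of $\tilde\psi$ is infinite.

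I would produce this infinite class by picking an essential simple closed curve $\gamma\subset S^2\setminus\mathcal{B}$ with $\tilde\psi(\gamma)\not\simeq\gamma$ --- for example, a curve bounding one point of a swapped pair together with the two $\tilde\psi$-fixed marked points on one side, whose $\tilde\psi$-image instead bounds the other point of that swapped pair together with the fixed marked points (a distinct isotopy class, since $n\geq 3$ ensures the other side has at least five marked points) --- and applying the standard relation $\tilde\psi\, T_\gamma\, \tilde\psi^{-1}=T_{\tilde\psi(\gamma)}$ to get
\[
T_\gamma^k\,\tilde\psi\,T_\gamma^{-k}=T_\gamma^k\,T_{\tilde\psi(\gamma)}^{-k}\,\tilde\psi.
\]
These elements are pairwise distinct as $k$ ranges over $\mathbb{Z}$: equality for $k\neq m$ would force $T_\gamma^{k-m}=T_{\tilde\psi(\gamma)}^{k-m}$, and hence $\gamma\simeq\tilde\psi(\gamma)$, contradicting the choice of $\gamma$. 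The main obstacle will be this final infinitude step --- specifically the Birman--Hilden descent, identifying the combinatorial action of $\tilde\psi$ on $\mathcal{B}$, and verifying that a suitable $\gamma$ exists for every $n\geq 3$ --- while the remaining parts follow routinely from previously established results.
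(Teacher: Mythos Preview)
Your argument is correct, but the route is quite different from the paper's. The paper stays entirely upstairs on $S$: it observes directly that each curve $\beta_j$ is preserved by the rotation $\lambda$, so $b$ commutes with $[\lambda]=(cab)^2$; it then defines $h_m=b^m(cab)b^{-m}$, checks $h_m^2=[\lambda]$, conjugates to reach an arbitrary hyperelliptic involution $\rho$, and proves the $h_m$ are pairwise distinct by a bare-hands intersection-number computation showing $ca$ and $b^m$ do not commute for $m\neq0$. No Birman--Hilden for closed surfaces, no Nielsen realization, no Riemann--Hurwitz.

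Your approach trades that explicitness for structure: you pass to the quotient sphere via the closed-surface Birman--Hilden sequence, use Nielsen realization and Riemann--Hurwitz to pin down that the image $\tilde\psi$ of $(abc)$ fixes two marked points and swaps the remaining $2n$ in pairs, and then produce the infinite family by the standard $T_\gamma^k\tilde\psi T_\gamma^{-k}$ trick on $S_{0,2n+2}$ with a well-chosen $\gamma$. This is heavier machinery but gives a cleaner conceptual reason for the infinitude (nontrivial elements of $\mathrm{Mod}(S_{0,2n+2})$ have infinite conjugacy classes once $n\geq3$) and would generalize more readily. The paper's proof, by contrast, is completely self-contained given the earlier sections---in particular it never needs the centralizer of the hyperelliptic involution to equal the symmetric mapping class group, nor any realization theorem---and it produces an explicit one-parameter family $gh_mg^{-1}$ written in the generators $a,b,c$.
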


\begin{proof}
Let $n\geq3$ and let $j\in\{1,...,n\}$. Recall the placement of the curve $\beta_j$ in \blue{Figure \ref{Figure Alexander}} and the definition of the homeomorphism $\lambda$ given in \blue{Figure \ref{Figure lambda}}. Notice that $\beta_j$ is fixed by $\lambda$ up to isotopy, and hence we have
\[
T_{\beta_j}=T_{\lambda(\beta_j)}=[\lambda]\circ T_{\beta_j}\circ[\lambda]
\]
by \cite[Fact 3.7]{Primer}.
We conclude that $T_{\beta_j}$ commutes with $[\lambda]$ for all $j\in\{1,...,n\}$, so by extension $b\in\Theta_n^1$ also does.

Now let $m\in\mathbb{Z}$ and let $h_m\coloneq b^m(cab)b^{-m}$. We know that $h_m$ is conjugate to $abc$ because $cab$ is. We have that
\[
h_m^2=b^m(cab)^2b^{-m}=b^m[\lambda] b^{-m}=[\lambda],
\]
where the second equality is due to \blue{Corollary \ref{Corollary 5.1.1}}.
Next, assume $\rho\in\mathrm{Mod}(S)$ is a hyperelliptic involution. Then $\rho$ is conjugate to $[\lambda]$, i.e. there exists $g\in\mathrm{Mod}(S)$ such that
\[
\rho=g[\lambda] g^{-1}=gh_m^2g^{-1}=(gh_mg^{-1})^2.
\]
Therefore, $\rho$ has at least one square root which is conjugate to $abc$. In order to show that there is an infinite number of such square roots of $\rho$, it will suffice to show that the roots of the form $gh_mg^{-1}$ are pairwise distinct, i.e. for all $m_1,m_2\in\mathbb{Z}$ we have
$h_{m_1}=h_{m_2} \implies m_1=m_2 $. If we let $m=m_1-m_2$, then we have
\[
h_{m_1}=h_{m_2}\iff h_m=h_0,
\]
So the desired statement becomes,
\begin{align*}
\forall m\in\mathbb{Z},\quad h_m=h_0\implies m=0.
\end{align*}
We will prove the contrapositive. Assume $m\neq 0$ and notice that
\[
h_m=h_0\iff b^m(ca)b^{-m}=ca,
\]
so we must show that $ca$ does not commute with $b^m$. First, it can be seen that
\[
b(\beta_2)=T_{\beta_2}(\beta_2)=\beta_2,
\]
and as a result we have
\[
cab^m(\beta_2)=ca(\beta_2).
\]
Next, for any two curves $\gamma_1,\gamma_2$ in $S$, let $i(\gamma_1,\gamma_2)$ denote the geometric intersection number of these curves. We have that the mapping class $ca$ affects the curve $\beta_2$ in the following way:

\begin{center}  
  \includegraphics[width=0.85\textwidth]{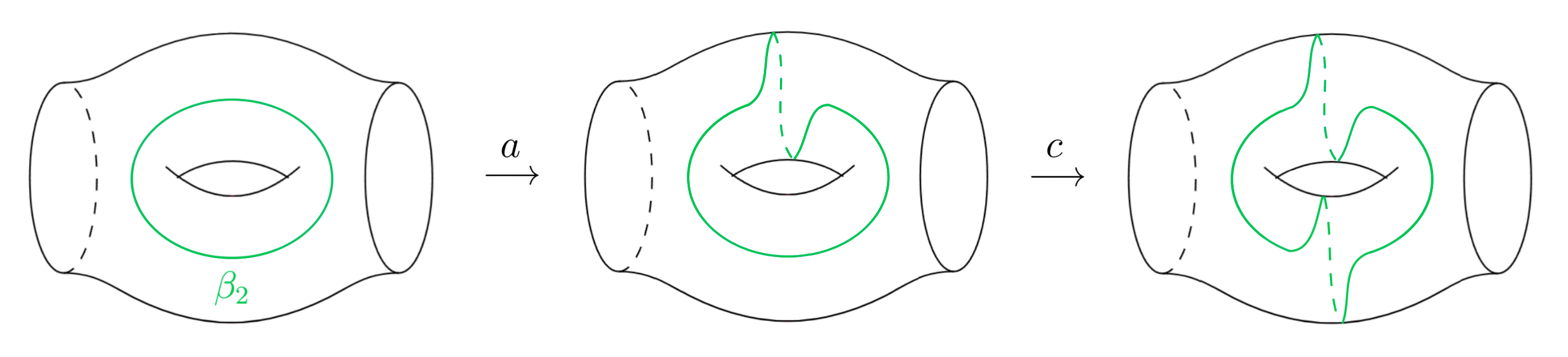}  
\end{center}

So then, by the bigon criterion \cite[Proposition 1.7]{Primer}, we have that $i(\beta_2,ca(\beta_2))\neq 0$. For notational ease, let $x=\beta_2$ and $y=ca(\beta_2)$. We have the following.
\begin{alignat*}{2}
&&i(x,y)&\neq0\\
&\implies\quad&|m|i(x,y)^2&\neq0\\
&\implies\quad& i(T_x^{\,m}(y),y)&\neq 0\\
&\implies\quad& T_x^{\,m}(y)&\neq y,
\end{alignat*}
where the second implication is due to \cite[Proposition 3.2]{Primer}. Hence we obtain
\[
b^mca(\beta_2)= T_{\beta_2}^{\,m}\circ ca(\beta_2)\neq ca(\beta_2)=cab^m(\beta_2).
\]
We have therefore shown that $ca$ and $b^m$ do not commute, as required. The original claim is proven for $abc$, but also holds for $(abc)^{-1}$ by considering $gh_m^{-1}g^{-1}$ for all $m\in \mathbb{Z}$.

\end{proof}

\vspace{2pt}

\section{Summary}
\label{Section 7}

The classification of every group of type $\Theta_n^k$ may be visualized efficiently by partitioning the $\mathbb{N}^2$ lattice. in the following figure, each point $(n,k)$ corresponds to the group $\Theta_n^k$. The group they are isomorphic to depends on the box they lie in.

\begin{center}  
  \captionof{figure}{The classification of groups of type $\Theta_n^k$.}
\includegraphics[width=0.7\textwidth]{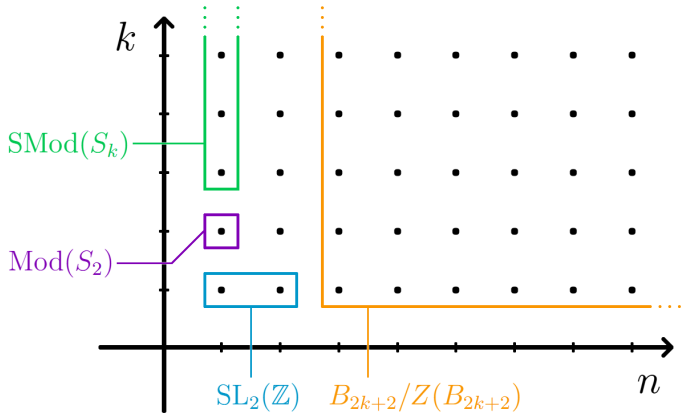}  
  \label{Figure X}
\end{center}

The blue box corresponds to the result $\Theta_1^1\cong\Theta_2^1\cong\mathrm{SL}_2(\mathbb{Z})$ proven in \blue{Section \ref{Section 3}}, and the orange box corresponds to the result $\Theta_n^k\cong B_{2k+2}/Z(B_{2k+2})$ for $n\geq 3$, proven in \blue{Section \ref{Section 4}}. Now, let $\mathrm{SMod}(S_k)$ denote the symmetric mapping classes on $S_k$ with respect to the branched cover induced by $\lambda$. Then we can notice that $\Theta_1^k=\mathrm{SMod}(S_k)$ for each $k\geq 2$ by construction, which explains the green box. That being said, the mapping classes on $S_2$ are all symmetric \cite[Section 9.4.2]{Primer}, so $\Theta_1^2=\mathrm{SMod}(S_2)=\mathrm{Mod}(S_2)$, which explains the purple box.

There is clearly one subset of the lattice which has not been classified, namely the set of groups $\Theta_2^k$ for $k\geq 2$. This remains an open question, however we can at least show that these groups don't belong to the green or orange boxes. Indeed, if $k\geq 2$, then $\Theta_2^k$ cannot be isomorphic to $\mathrm{SMod}(S_k)$, because the relation $(f_1...f_{2k-1})^{2k}=(f_{2k+1})^2$ doesn't hold in $\Theta_2^k$, whereas it does in $\mathrm{SMod}(S_k)$ (because of a chain relation). Furthermore, $\Theta_2^k$ is not isomorphic to $B_{2k+2}/Z(B_{2k+2})$ either, because the relation $(f_1...f_{2k})^{4k+2}=1$ holds in $\Theta_2^k$, but it doesn't hold in $B_{2k+2}/Z(B_{2k+2})$. Here is a quick proof. By a chain relation, we have $(f_1...f_{2k})^{4k+2}=T_{\gamma_1}^{-1}\circ T_{\gamma_2}$, where the curves $\gamma_1$ and $\gamma_2$ are given below.

\begin{center}  
  \includegraphics[width=0.48\textwidth]{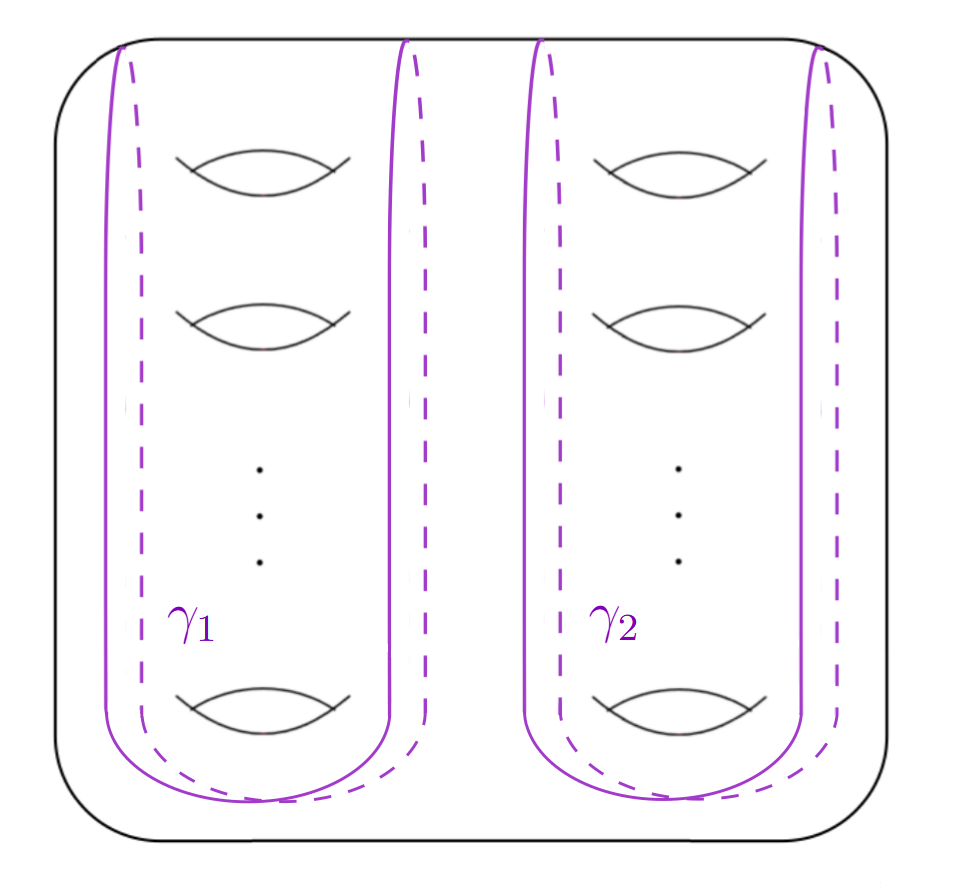}  
\end{center}

Notice that $\gamma_1$ and $\gamma_2$ are both isotopic to the curve $\boldsymbol{\varepsilon}_1$ from \blue{Figure \ref{Figure S}}, and therefore $T_{\gamma_1}^{-1}\circ T_{\gamma_2}=1$.
However, if $n\geq 3$, then by the same chain relation the word $(f_1...f_{2k})^{4k+2}$ inside $\Theta_n^k$ yields a composition of disjoint Dehn twists about essential curves $\gamma_1,...,\gamma_n$  similar to those in the above figure. These curves aren't isotopic to each other, so the Dehn twists in question don't cancel out and hence the relation $(f_1...f_{2k})^{4k+2}=1$ doesn't hold inside $B_{2k+2}/Z(B_{2k+2})$.

We can also classify some hyperelliptic involutions by using the lattice in \blue{Figure \ref{Figure X}}. Indeed, the groups of type $\Theta_n^1$ (which correspond to the bottommost row in the lattice) each contain the hyperelliptic involution $(f_1f_2f_3)^2=(abc)^2$ discussed in \blue{Section \ref{Section 5}} and the corresponding roots discussed in \blue{Section \ref{Section 6}}. Meanwhile, the groups of type $\Theta_1^k$ (which correspond to the leftmost column in the lattice) each contain the hyperelliptic involution $(f_{2k+1}...f_1f_1...f_{2k+1})$, which is the usual way of expressing a hyperelliptic involution in terms of Dehn twists (as discussed in \blue{Section \ref{Section 1}}).

\newpage

\bibliographystyle{unsrt}
\bibliography{bibliography}

\vspace{12pt}

\end{document}